\newcommand{\bbt}{\mathbb{T}}
\newcommand{\bbd}{\mathbb{D}}
\newcommand{\bbz}{\mathbb{Z}}
\newcommand{\calc}{\mathcal{C}}
\newcommand{\dsp}{\displaystyle}
\newcommand{\e}{\varepsilon}
\newtheorem{theo}{Theorem}[section]
\newtheorem{prop}[theo]{Proposition}
\newtheorem{lemme}[theo]{Lemma}
\begin{document}

\title[On powers of operators with spectrum in Cantor sets and spectral synthesis]{On powers of operators with spectrum in Cantor sets and spectral synthesis.}
\author[ M. Zarrabi]{Mohamed Zarrabi}
\address{Universit\'e de Bordeaux\\ Institut de Math\'ematiques de Bordeaux UMR 5251\\
351 cours de la Liberation\\F-33405 Talence Cedex\\France}
\email{Mohamed.Zarrabi@math.u-bordeaux.fr}

 \subjclass[2010]{47A10, 43A45, 46E25}
   \keywords{Operators, Growth of powers of  operators, Spectral synthesis, Cantor sets}

\date{}
\maketitle

\begin{abstract}
For $\xi \in \big( 0, \frac{1}{2} \big)$, let $E_{\xi}$ be the perfect symmetric set associated with $\xi$, that is
$$
E_{\xi} = \Big\{ \exp \Big( 2i \pi (1-\xi) \dsp \sum_{n = 1}^{+\infty} \epsilon_{n} \xi^{n-1} \Big) : \, \epsilon_{n} = 0 \textrm{ or } 1 \quad (n \geq 1) \Big\}
$$
and 
$$
\dsp b(\xi) = \frac{\log{\frac{1}{\xi}} - \log{2}}{2\log{\frac{1}{\xi}} - \log{2}}.
$$
Let $q\geq 3$ be an integer and $s$ be a nonnegative real number. We show that any invertible operator $T$ on a Banach space with spectrum contained in $E_{1/q}$ that satisfies 
\begin{eqnarray*} 
& & \big\| T^{n} \big\| = O \big( n^{s} \big), \,n \rightarrow +\infty \\
& \textrm{and} & \big\| T^{-n} \big\| = O \big( e^{n^{\beta}} \big), \, n \rightarrow +\infty \textrm{ for some } \beta < b(1/q),
\end{eqnarray*}
also satisfies the stronger property $\dsp \big\| T^{-n} \big\| = O \big( n^{s} \big), \, n \rightarrow +\infty.$   We also show that this result
is false for  $E_\xi$ when $1/\xi$ is not a Pisot number and that the constant $b(1/q)$ is sharp. As a consequence we prove that, if $\omega$ is a submulticative weight such that $\dsp \omega(n)=(1+n)^s, \, (n \geq 0)$ and $\dsp C^{-1} (1+|n|)^s \leq \omega(-n) \leq  C e^{n^{\beta}},\, (n\geq 0)$, for some constants $\dsp C>0$ and  $\dsp \beta < b( 1/q),$ then $E_{1/q}$ satisfies spectral synthesis in the Beurling algebra of all continuous functions $f$ on the unit circle $\bbt$ such that 
$\sum_{n = -\infty}^{+\infty} | \widehat{f}(n) | \omega (n) < +\infty$.

\end{abstract}

\section{Introduction.}

Let $T$ be an invertible operator defined on a Banach space with spectrum ($\mathrm{Sp}(T)$) contained in the unit circle $\bbt$. Several results show that  growth
conditions on $\dsp (\|T^n\|)_{n\geq 0}$ and  geometric or arithmetic conditions on $\dsp \mathrm{Sp}(T)$ induce  a certain behavior on the negative powers 
$\dsp (\|T^{-n}\|)_{n\geq 0}$. For example,  it is  shown by the author in \cite{Zar1},  that if $\dsp \sup_{n\geq 0} \|T^n\|<+\infty$ and if $\dsp \mathrm{Sp}(T)$ is countable then 
$\dsp \sup_{n\geq 0} \|T^{-n}\|<+\infty$ or $\dsp \limsup_{n\to +\infty}\frac{\log  \|T^{-n}\|}{\sqrt{n}}>0$. See also \cite[Theorem 5.6]{BY1}  
for a similar result on representations of groups by operators.

For $\xi \in \big( 0, \frac{1}{2} \big)$, Let $E_{\xi}$ be the perfect symmetric set associated with $\xi$, that is
$$
E_{\xi} = \Big\{ \exp \Big( 2i \pi (1-\xi) \dsp \sum_{n = 1}^{+\infty} \epsilon_{n} \xi^{n-1} \Big) : \, \epsilon_{n} = 0 \textrm{ or } 1 \quad (n \geq 1) \Big\},
$$
and let
$$
\dsp b(\xi) = \frac{\log{\frac{1}{\xi}} - \log{2}}{2\log{\frac{1}{\xi}} - \log{2}}.
$$
Notice that $E_{1/3}$ is the classical triadic Cantor set. Let $q\geq 3$ be an integer. J. Esterle showed  in \cite[p. 79]{Est2} that if 
$\dsp \sup_{n\geq 0} \|T^n\|<+\infty$ and $\dsp \mathrm{Sp}(T)\subset E_{1/q}$ then 
$\dsp \sup_{n\geq 0} \|T^{-n}\|<+\infty$ or $\dsp\limsup_{n\to +\infty}\frac{\log  \|T^{-n}\|}{{n}^\beta}=+\infty$, for every $\beta< b(1/q)$. On the other hand it is proved in \cite[Theorem 2.2 ]{ERZ} that the constant $b(1/q)$ is sharp. For  further results of this type see  \cite{Agr1}, \cite{Agr2}, \cite{AK}, \cite{Est1}, \cite{Est2}, \cite{FK}, \cite{Zar1}. In this paper we are interested by operators with spectrum contained in $E_\xi$ and such that $\dsp (\|T^n\|)_{n\geq 0}$ has a polynomial growth. Let $s$ be a nonnegative real number. We show (Theorem \ref{operateur})    that if 
$$
\big\| T^{n} \big\| = O \big( n^{s} \big), \,n \rightarrow +\infty,
$$
$$
\big\| T^{-n} \big\| = O \big( e^{n^{\beta}} \big), \, n \rightarrow +\infty \textrm{ for some } \beta < b(1/q),
$$
and if $\dsp \mathrm{Sp}(T)\subset E_{1/q}$,  then $T$ satifies the stronger condition
$$
\big\| T^{-n} \big\| = O \big( n^{s} \big), \,n \rightarrow +\infty.
$$
This extends the result in    \cite[p. 79]{Est2} for  the case $s=0$  and improves significantly  \cite[Corollary 2.5]{Agr2}, where it is proved that, under the same assumptions, 
$$
\big\| T^{-n} \big\| = O \big( n^{r} \big), \,n \rightarrow +\infty \ {\text{ for every }}\ r>s+1/2.
$$
On the other hand we show (Propositions \ref{sharp 1} and \ref{sharp 2}) that the above result  is false for $E_\xi$ when $1/\xi$ is not a Pisot number and that the constant $b(1/q)$ is sharp.

In section 3, we apply   Theorem \ref{operateur}   to show  that the sets $E_{1/q}$ satisfy  spectral synthesis  in  certain  Beurling weighted algebras. First, we recall  some notations and definitions.  Let $\omega = \big( \omega(n) \big)_{n \in \bbz}$ be a weight, that is a real sequence such that $\omega(n) \geq 1$ and $\omega(n+m) \leq \omega(n) \omega(m)$ for $m,n\in\bbz$. We define the Banach algebra
$$
A_{\omega}(\bbt) = \Big\{ f \in \calc(\bbt) : \, \big\| f \big\|_{\omega} = \sum_{n = -\infty}^{+\infty} | \widehat{f}(n) | \omega(n) < +\infty \Big\},
$$
where $\widehat{f}(n)$ is the $\textrm{n}^{\textrm{th}}$ Fourier coefficient of $f$, and $\calc(\bbt)$ the space of all continuous functions on $\bbt$. In this paper, we will consider only regular algebras, that is algebras $A_{\omega}(\bbt)$ for which the weight satisfies $\dsp \sum_{n \in \bbz} \frac{\log{\omega(n)}}{1+n^{2}}<+\infty$ (see \cite[ex. 7, p. 118]{Katz}). 

Let $E$ be a closed subset of $\bbt$. We define the closed ideal
$$ 
I_{\omega}(E) = \Big\{ f \in A_{s}(\bbt) : \, f = 0 \ {\text{ on }} \ E \Big\},
$$
and the ideal $J_{\omega}(E)$ to be the closure in $A_{\omega}(\bbt)$ for the norm $\| \, \|_{\omega}$ of the set
$$
\Big\{ f \in A_{\omega}(\bbt) : \,  f = 0 \ {\text{on a neighborhood of}}\ E \Big\}.
$$
We say that a function $f$ satisfies spectral synthesis for $E$ in $A_{\omega}(\bbt)$ if $f \in J_{\omega}(E)$, and that $E$ satisfies spectral synthesis in $A_{\omega}(\bbt)$ if 
$J_{\omega}(E) = I_{\omega}(E).$

Let $I$ be a closed ideal of $A_{\omega}(\bbt)$ such that $h(I)=E$, where 
$$
h(I)=\{z\in\bbt :\ f(z)=0,\ \forall \ f\in I\}.
$$
Then $J_{\omega}(E) \subset I\subset I_{\omega}(E)$ (see \cite[p. 224]{Katz}). Thus the set $E$ satisfies spectral synthesis in $A_{\omega}(\bbt)$ if and only  if every closed ideal $I$ of $A_{\omega}(\bbt)$ such that
 $h(I)=E$ is equal to $I_{\omega}(E)$.
 
In the  case where the weight $\omega$ is defined by $\omega(n) = (1+|n|)^{s}$ for all $n \in \bbz$, with $s$ a nonnegative real number, we denote by $\big( A_{s}(\bbt), \| \, \|_{s} \big)$ the algebra $\big( A_{\omega}(\bbt), \| \, \|_{\omega} \big)$, and by $I_{s}(E)$ and $J_{s}(E)$ the closed ideals $I_{\omega}(E)$ and $J_{\omega}(E)$. We remark that $A_{0}(\bbt) = A(\bbt)$ is in fact the classical Wiener algebra. 

In \cite{Her}, C. S. Herz proved that  $E_{1/q}$ satisfies spectral synthesis in the Wiener algebra $A(\bbt)$ (see also \cite[p. 58--59]{Kaha1}). In \cite{Est2}, J. Esterle showed  that $E_{1/q}$ satisfies spectral synthesis in $A_{\omega}(\bbt)$ for all weights $\omega$ such that $\omega(n)=1 \, (n \geq 0)$ and $ \omega(-n) = O \big( e^{n^{\beta}} \big)$, $n\to+\infty$, for some $\dsp \beta < b( 1/q).$ Here we extend this result (Theorem \ref{synthesecantor})   to a  larger class  of weights, that is,  weights satisfying 
$$
\omega(n) = (1+n)^{s} \quad (n \geq 0),
$$
and
 $$\dsp C^{-1} (1+|n|)^s \leq \omega(-n) \leq  C e^{n^{\beta}},\quad (n\geq 0),$$
for some constants $s\geq 0$, $\dsp C>0$ and  $\dsp \beta < b( 1/q).$

\section{Behavior of powers of operators with spectrum in Cantor sets.}
Let $\xi \in \big( 0, \frac{1}{2} \big)$. We write  $\bbt\setminus E_\xi= \bigcup_{n\geq 0}L_n$, where the $L_n$ are the contiguous arcs to  $E$. We know from the construction of  $E_\xi$ (see \cite[Chapitre I]{KaSa}) that for every $k\geq 1$, there are exactly  $2^{k-1}$ arcs among the  $L_n$, which are of length  $2\pi\xi^{k-1}(1-2\xi)$. Hence, for a real number $\gamma$,
$$
\sum_{n\geq 0}|L_n|^\gamma=\sum_{k\geq 1}2^{k-1}\left(2\pi\xi^{k-1}(1-2\xi)\right)^\gamma={{(2\pi(1-2\xi))^\gamma}\over{2\xi^\gamma}}\sum_{k\geq 1}(2\xi^\gamma)^k.
$$
\noindent So, the sum  $\sum_{n\geq 0}|L_n|^\gamma$  is finite if and only if  $\gamma>{{\log 2}\over{\log {1/\xi}}}$. On the other hand, by a simple computation, we can see  that for a real number $\delta$, the integral  
$\dsp \int_0^{2\pi}{{1}\over{d(e^{it},E_\xi)^\delta}}dt$ converges if and only if  the series  $\sum_{n\geq 0}|L_n|^{1-\delta}$ converges. Therefore
\begin{eqnarray} \label{integral}
\int_0^{2\pi}{{1}\over{d(e^{it},E_\xi)^\delta}}dt<+\infty \  {\textrm{ if and only if }} \ \delta < 1-{{\log 2}\over{\log {1/\xi}}}.
\end{eqnarray}
We denote by  $A^\infty (\bbd)$ the set of all functions holomorphic in the unit disk $\bbd$ and infinitely differentiable in the closed disk $\overline{\bbd}$.

\begin{lemme} \label{lem op}
Let $s$ be a nonnegative real, $q \geq 3$ be an integer and  $\omega$ be a weight such that 
$$\dsp \omega (n)=(1+n)^s,\ n\geq 0,$$
and
$$
\dsp {\omega(-n)}=O(e^{n^\beta}),\ n\to +\infty,  \ {\text{for some}} \ \beta < b(1/q).
$$ 
 
 Then there exists an outer fonction $f\in A^\infty (\bbd)$ such that $f(0)=1$ and for every nonnegative integer $m$, $f(z^{q^m})\in J_\omega (E_{1/q})$.
\end{lemme}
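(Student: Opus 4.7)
My plan is to construct $f$ as a normalized outer function whose boundary modulus vanishes super-polynomially near $E_{1/q}$, then to place $f(z^{q^m})$ in $J_\omega(E_{1/q})$ by multiplicative cutoff, the critical point being a Fourier-coefficient estimate whose sharp exponent matches the Cantor-dimension-driven constant $b(1/q)$.

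For the construction, fix $\alpha\in(0,1-\log 2/\log q)$; by \eqref{integral}, $\psi(t):=d(e^{it},E_{1/q})^{-\alpha}\in L^1(\bbt)$, and
\begin{equation*}
f(z)=\exp\Bigl(-\frac{1}{2\pi}\int_0^{2\pi}\frac{e^{it}+z}{e^{it}-z}\psi(t)\,dt+\frac{1}{2\pi}\int_0^{2\pi}\psi(t)\,dt\Bigr)
\end{equation*}
is outer in $\bbd$ with $f(0)=1$ and $|f(e^{it})|=\exp(c-\psi(t))$ for a constant $c$. Standard derivative estimates for outer functions, combined with analyticity across $\bbt\setminus E_{1/q}$, then give $f\in A^\infty(\bbd)$ with $f^{(k)}\equiv 0$ on $E_{1/q}$ for every $k\ge 0$. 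Since $z\mapsto z^q$ maps $E_{1/q}$ onto itself, one has $E_{1/q}^{q^m}\subset E_{1/q}$, so $f(z^{q^m})\in A^\infty(\bbd)$ also vanishes to infinite order on $E_{1/q}$.

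Next I approximate $f(z^{q^m})$ in $A_\omega(\bbt)$ by cutting off near $E_{1/q}$. Let $\chi_\epsilon\in\calc^\infty(\bbt)$ be equal to $0$ on $\{d(z,E_{1/q})<\epsilon\}$, equal to $1$ on $\{d(z,E_{1/q})>2\epsilon\}$, with $\|\chi_\epsilon^{(j)}\|_\infty=O(\epsilon^{-j})$. Then $g_\epsilon:=\chi_\epsilon f(z^{q^m})$ vanishes on a neighbourhood of $E_{1/q}$, so the task reduces to showing $\|h_\epsilon\|_\omega\to 0$ as $\epsilon\to 0$, where $h_\epsilon:=(1-\chi_\epsilon)f(z^{q^m})$ is supported on the $2\epsilon$-tubular neighbourhood of $E_{1/q}$, whose measure is $O(\epsilon^{1-\log 2/\log q})$ by the arc counting recalled at the start of this section, and satisfies $|h_\epsilon|\lesssim\exp(-c\epsilon^{-\alpha})$. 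The positive-Fourier contribution $\sum_{n\ge 0}(1+n)^s|\widehat{h_\epsilon}(n)|$ is then controlled by this uniform exponential decay together with bounds on finitely many derivatives.

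The main obstacle is the negative Fourier side, where $\omega(-n)=O(e^{n^\beta})$ grows exponentially. I plan to estimate $|\widehat{h_\epsilon}(-n)|$ by $N$-fold integration by parts, using Leibniz, the cutoff bound $\|\chi_\epsilon^{(j)}\|_\infty=O(\epsilon^{-j})$, Cauchy-type estimates for $f^{(j)}$ in the gap components of $\bbt\setminus E_{1/q}$, and the chain-rule factor $q^{mj}$ from differentiating $f(z^{q^m})$. This yields, schematically,
\begin{equation*}
|\widehat{h_\epsilon}(-n)|\;\lesssim\;N!\,(C_m/\epsilon)^{N}\,n^{-N}\,\epsilon^{1-\log 2/\log q}\exp(-c\epsilon^{-\alpha}).
\end{equation*}
Optimising $N\sim\epsilon n$ via Stirling gives $|\widehat{h_\epsilon}(-n)|\lesssim\exp(-c\epsilon n)\exp(-c\epsilon^{-\alpha})$, and then optimising $\epsilon\sim n^{-1/(1+\alpha)}$ produces the bound $|\widehat{h_\epsilon}(-n)|\lesssim\exp(-cn^{\alpha/(1+\alpha)})$. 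As $\alpha\uparrow 1-\log 2/\log q$ the exponent $\alpha/(1+\alpha)$ tends to $(1-\log 2/\log q)/(2-\log 2/\log q)=b(1/q)$, so for any $\beta<b(1/q)$ one can choose $\alpha$ with $\alpha/(1+\alpha)>\beta$, giving a uniform summable majorant of $|\widehat{h_\epsilon}(-n)|\omega(-n)$ and, via dominated convergence in $\epsilon$, the required convergence $\|h_\epsilon\|_\omega\to 0$. The exact agreement of this optimised exponent with $b(1/q)$, ultimately coming from the dimension $\log 2/\log q$ of $E_{1/q}$ through \eqref{integral}, is the delicate heart of the lemma.
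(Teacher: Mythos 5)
Your approach is genuinely different from the paper's and it has a serious gap at its centre. The paper does not attempt a direct cutoff argument for $f(z^{q^m})$. Instead it picks $\gamma\in(\beta,b(1/q))$ and $\delta\in\bigl(\tfrac{\gamma}{1-\gamma},\,1-\tfrac{\log 2}{\log q}\bigr)$, introduces the auxiliary weight $\omega_\gamma(n)=(1+n)^s$ for $n\ge0$, $\omega_\gamma(-n)=e^{n^\gamma}$ for $n>0$, and then \emph{cites} Esterle's Lemma~7.2 for the existence of the outer function $f$ with $|f|\le e^{-d(\cdot,E_{1/q})^{-\delta}}$ and Esterle's Lemma~7.1 to conclude $f\in J_{\omega_\gamma}(E_{1/q})$. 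The passage from $f$ to $f(z^{q^m})$ is then a clean Fourier-dilation step: if $f_n\to f$ in $A_{\omega_\gamma}$ with each $f_n$ vanishing near $E_{1/q}$, then $\|f_n(z^{q^m})-f(z^{q^m})\|_\omega\le \bigl(\sup_k\omega(q^mk)/\omega_\gamma(k)\bigr)\|f_n-f\|_{\omega_\gamma}$, and that supremum is finite precisely because $\gamma>\beta$ kills the $q^{\beta m}k^\beta$ blow-up. The whole point of the auxiliary weight is to absorb the composition $z\mapsto z^{q^m}$.

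Your plan instead tries to prove membership in $J_\omega(E_{1/q})$ directly, by cutting off and estimating the negative Fourier coefficients of $h_\epsilon=(1-\chi_\epsilon)f(z^{q^m})$ via $N$-fold integration by parts. The critical schematic bound
\begin{equation*}
|\widehat{h_\epsilon}(-n)|\lesssim N!\,(C_m/\epsilon)^N\,n^{-N}\,\epsilon^{1-\log 2/\log q}\,e^{-c\epsilon^{-\alpha}}
\end{equation*}
is asserted, not proved, and it is exactly the hard content that Esterle's Lemma~7.1 supplies. Two concrete problems. First, the bound implicitly requires $\sup\{|f^{(N)}(z)|:d(z,E_{1/q})\le 2\epsilon\}\lesssim N!(C/\epsilon)^Ne^{-c\epsilon^{-\alpha}}$. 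But $f$ is holomorphic only in $\bbd$, so Cauchy estimates at a boundary point $z_0$ with $d(z_0,E_{1/q})=\rho$ must be taken on disks of radius $\sim\rho$ contained in $\bbd$, giving at best $|f^{(N)}|\lesssim N!(C/\rho)^Ne^{-c\rho^{-\alpha}}$ with $\rho$ ranging over $(0,2\epsilon]$ on the support; maximising $(C/\rho)^Ne^{-c\rho^{-\alpha}}$ over $\rho$ places the maximum at $\rho\sim N^{-1/\alpha}$ with value of Gevrey type $\sim N^{N/\alpha}$, which is \emph{larger} than the $N!\,\epsilon^{-N}$ you wrote when $N\gtrsim\epsilon^{-\alpha}$ --- and that threshold is precisely where your optimisation $N\sim\epsilon n$, $\epsilon\sim n^{-1/(1+\alpha)}$ lands. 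So whether your final exponent survives depends on constants you have not controlled, and the integral over the tube (weighting by the measure $\sim\epsilon^{1-\log 2/\log q}$) does not obviously rescue it. Second, there is a conceptual slip in treating $\epsilon$ as a function of $n$: $\epsilon$ is a single cutoff parameter, and the argument must be \lq\lq fix $\epsilon$, show $\|h_\epsilon\|_\omega<\infty$, then let $\epsilon\to0$.\rq\rq\ The condition you ultimately need is $\sum_n e^{n^\beta}e^{-c\epsilon n}\lesssim e^{C\epsilon^{-\beta/(1-\beta)}}$, so $\|h_\epsilon\|_\omega\lesssim e^{-c\epsilon^{-\alpha}+C\epsilon^{-\beta/(1-\beta)}}\to0$ provided $\alpha>\beta/(1-\beta)$, i.e.\ $\beta<\alpha/(1+\alpha)$; your algebra identifying $\alpha/(1+\alpha)\uparrow b(1/q)$ as $\alpha\uparrow1-\log2/\log q$ is correct, but it rests on the unproved derivative estimate. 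In short, the mechanism you identified is the right one, but the step that makes it work is exactly what the paper outsources to Esterle, and your sketch does not replace it.
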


\begin{proof}
Let $\gamma$ be a real number such that  $\beta<\gamma <b(1/q)$. There exists a real number $\delta$ such that  $\frac{\gamma}{1-\gamma}<\delta <1-{{\log 2}\over{\log {q}}}$. We set 
$$\dsp \omega_\gamma (n)=(1+n)^s,\ n\geq 0 \ \ \mathrm{ and }\ \ \omega_\gamma (-n)=e^{n^\gamma},\ n>0.$$
 Since  $\delta <1-{{\log 2}\over{\log {q}}}$, it follows from  (\ref{integral})  and  \cite[Lemma 7.2]{Est2}  that there exists a nonzero outer function $f\in A^\infty (\bbd)$ which vanishes exactly on $E_{1/q}$ and which satisfies 
$$
|f(z) |\leq e^{- d(z,E_{1/q})^{-\delta}}, \ z\in \bbt\setminus E_{1/q}.
$$
We may assume that $f(0)=1$. Now since  $\frac{\gamma}{1-\gamma}<\delta$, by \cite[Lemma 7.1]{Est2}, we have $f\in J_{\omega_\gamma} (E_{1/q})$. 

Let $m$ be a nonnegative integer. Notice that for every $z\in E_{1/q}$, $z^{q^m}\in E_{1/q}$. Take a sequence  $(f_n)_n\subset A_{\omega_\gamma}(\bbt)$, of functions vanishing on a neighborhood of $E_{1/q}$ and such that $\Vert f_n-f\Vert_{\omega_\gamma}\to 0$, $n\to \infty$. For every $n$, $ f_n(z^{q^m})$ vanishes on a neighborhood of   $E_{1/q}$ and   we have
$$
\begin{aligned}
\Vert f_n(z^{q^m})-f(z^{q^m})\Vert_{\omega} &= \sum_{k\in\bbz} |\hat{f_n}(k)-\hat{f}(k)|\omega (q^mk) \\
&\leq \sup_{k\in\bbz}\frac{\omega (q^mk) }{\omega_\gamma (k)} \Vert f_n-f\Vert_{\omega_\gamma} \\
&\leq C\Vert f_n-f\Vert_{\omega_\gamma},
\end{aligned}
$$
where $C=\sup_{k\in\bbz}\big( \omega(k)/\omega_\beta (k)\big) \max\left(q^{ m s}, \sup_{k\geq 0}\big(e^{q^{\beta m}k^\beta-k^\gamma}\big)\right)<+\infty$, where $\omega_\beta$ is defined similarly to $\omega_\gamma$. Thus $\Vert f_n(z^{q^m})-f(z^{q^m})\Vert_{\omega} \to 0$, as $n\to \infty$.
\end{proof}

The following theorem is the main result of this section.

\begin{theo} \label{operateur}
Let $s$ be a nonnegative real number, $q \geq 3$ an integer  and $T$ an  invertible operator  on a Banach space with spectrum contained in $E_{1/q}$ . If 
$$
\big\| T^{n} \big\| = O \big( n^{s} \big), \,n \rightarrow +\infty,
$$
and
$$
 \big\| T^{-n} \big\| = O \big( e^{n^{\beta}} \big), \, n \rightarrow +\infty \textrm{ for some } \beta < b(1/q),
$$
Then $T$  satisfies the stronger property 
$$\big\| T^{-n} \big\| = O \big( n^{s} \big), \, n \rightarrow +\infty.$$
\end{theo}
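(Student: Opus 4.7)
The plan is to build a Beurling-algebra functional calculus for $T$ and feed it the outer function produced by Lemma~\ref{lem op}. Define a weight $\omega$ by $\omega(n) = (1+n)^s$ for $n \geq 0$ and $\omega(-n) = Ce^{n^{\beta}}$ for $n > 0$, with $C \geq 1$ chosen so that $\omega$ is submultiplicative on $\bbz$. The hypotheses $\|T^n\| = O(n^s)$ and $\|T^{-n}\| = O(e^{n^\beta})$ then yield $\|T^n\| \leq K\omega(n)$ for every $n \in \bbz$, so the assignment $g \mapsto \sum_{n \in \bbz} \widehat{g}(n)\, T^n$ extends by continuity to a bounded unital algebra homomorphism $\Phi : A_\omega(\bbt) \to B(X)$. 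Because $\beta < b(1/q) < 1$, the algebra $A_\omega(\bbt)$ is regular.

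Since $A_\omega(\bbt)$ is regular with character space $\bbt$ and $\mathrm{Sp}(T) \subset E_{1/q}$, the standard localisation argument shows $\Phi$ annihilates $J_\omega(E_{1/q})$. Applying Lemma~\ref{lem op} one obtains an outer function $f \in A^\infty(\bbd)$ with $f(0)=1$ such that $f(z^{q^m}) \in J_\omega(E_{1/q})$ for every $m \geq 0$. Writing $f(z) = 1 + \sum_{k \geq 1} a_k z^k$ and applying $\Phi$ yields $f(T^{q^m}) = I + \sum_{k \geq 1} a_k T^{q^m k} = 0$ for every $m \geq 0$.

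The conclusion is then an elementary rearrangement. Fix $n \geq 1$ and let $m$ be the smallest integer with $q^m \geq n$, so that $q^m \leq qn$. Multiplying the identity $I = -\sum_{k \geq 1} a_k T^{q^m k}$ by $T^{-n}$ gives
$$
T^{-n} = -\sum_{k \geq 1} a_k\, T^{q^m k - n},
$$
and every exponent $q^m k - n$ is nonnegative and bounded above by $q^m k \leq qnk$. The positive-power estimate therefore yields $\|T^{q^m k - n}\| = O((nk)^s)$. Since $f \in A^\infty(\bbd)$, its Taylor coefficients $a_k$ decay faster than any polynomial, so $\sum_{k \geq 1} |a_k|\, k^s < \infty$, and summing the series delivers $\|T^{-n}\| = O(n^s)$, as required.

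The main technical point is the identity $\Phi(J_\omega(E_{1/q})) = 0$: one has to verify that the functional calculus obtained from the growth estimates is compatible with the Gelfand picture of $A_\omega(\bbt)$, and in particular kills every function vanishing on a neighbourhood of $\mathrm{Sp}(T)$. Once this localisation is in hand the rest of the proof is formal, and the sharp threshold $\beta < b(1/q)$ enters only through Lemma~\ref{lem op}, whose role is precisely to provide the outer function satisfying the rigid substitution property $f(z^{q^m}) \in J_\omega(E_{1/q})$ for all $m \geq 0$.
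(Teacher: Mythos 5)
Your proposal is correct and follows essentially the same route as the paper: build the weighted functional-calculus morphism $f\mapsto f(T)$ on $A_\omega(\bbt)$ with $\omega(n)=(1+n)^s$ for $n\geq 0$ and $\omega(-n)\asymp e^{n^\beta}$ for $n>0$, observe that its kernel is a closed ideal with hull $\mathrm{Sp}(T)\subset E_{1/q}$ and hence contains $J_\omega(E_{1/q})$, invoke Lemma~\ref{lem op} to get $f(T^{q^m})=0$ for all $m\geq 0$, and then write $T^{-n}$ as a series in nonnegative powers of $T$ using the power of $q$ just above $n$. The only cosmetic differences are the choice of $q^m$ with $q^m\geq n$ rather than $q^{m+1}$ with $q^m\leq n<q^{m+1}$, and your appeal to $A^\infty$-decay of the Taylor coefficients where the paper simply uses $\|f\|_\omega<\infty$; neither changes the substance.
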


\begin{proof} We take the weight $\omega (n)=(1+n)^s$ if $n\geq 0$ and $\omega (n)= e^{|n|^\beta}$ if $n<0$.  For $f \in A_{\omega}(\bbt) $, we set
$$
f(T)= \sum_{n=-\infty}^{+\infty} \widehat{f}(n) T^{n}.
$$
Let $I$ be the set of functions $f \in A_{\omega}(\bbt) $ such that $f(T)=0$. Then $I$ is a closed ideal of $A_{\omega}(\bbt)$ and $h(I)=\mathrm{Sp}(T)$ 
(see \cite[Th\'eor\`eme 2.4]{Zar1}). Therefore $J_{\omega} (E_{1/q})\subset I$. Now, let $f$ the outer function given by Lemma \ref{lem op}. Then, for every nonnegative integer $m$, $f(T^{{q^m}})=0$.

 Let $n$ be an integer greater or equal to 1 and let  $m$ be the unique integer such that  $q^m\leq n<q^{m+1}$. It follows from the last equalities that 
$$
T^{-n}=-\sum_{k=1}^{+\infty}\hat f(k)T^{kq^{m+1}-n},
$$
and 
$$
\begin{aligned}
\|T^{-n}\| &\leq\sum_{k=1}^{+\infty}|\hat f(k)|\|T^{kq^{m+1}-n}\| \\
&\leq \sum_{k=1}^{+\infty}|\hat f(k)|(1+kq^{m+1}-n)^s \\
&\leq q^sn^s\sum_{k=1}^{+\infty}|\hat f(k)|(1+k)^s.
\end{aligned}
$$

\end{proof}

The remainder of this section is devoted to discussing the sharpness of the hypothesis in  
Theorem \ref{operateur}. In the following proposition we show that the constant $b(\xi)$ is the best possible.

\begin{prop}\label{sharp 1} For every $\xi \in (0,1/2)$ there exists a contraction $T$ on a Hilbert space with spectrum contained in $E_\xi$, sucth that 
$$
\big\| T^{-n} \big\| = O \big( n^{b(\xi)} \big), \,n \rightarrow +\infty,
$$
and
$$\limsup_{n\to+\infty}\| T^{-n} \|/e^{n^\beta} = +\infty, \  \textrm{ for every } \ \beta < b(\xi).$$

\end{prop}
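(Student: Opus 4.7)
The plan is to realize $T$ as the Sz.-Nagy compressed shift attached to a singular inner function whose spectrum coincides with $E_\xi$. Concretely, I would first construct a positive singular measure $\mu$ on $\bbt$ whose closed support equals $E_\xi$, together with the associated singular inner function $\theta_{\xi}(z)=\exp\!\bigl(\int\frac{e^{it}+z}{e^{it}-z}d\mu(t)\bigr)$. The mass distribution of $\mu$ at every level of the Cantor construction would be tuned to the combinatorics appearing in $E_\xi$, namely $2^{k-1}$ contiguous arcs of length $\asymp\xi^{k-1}$, so that the boundary modulus of $\theta_{\xi}$ on $\bbt\setminus E_{\xi}$ decays at the sharp critical rate already used in Lemma \ref{lem op} (via the integral test \eqref{integral}).

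Second, I would set $K_{\theta_{\xi}}=H^{2}(\bbd)\ominus \theta_{\xi}H^{2}(\bbd)$ and let $T$ be the compressed shift $P_{K_{\theta_{\xi}}}M_{z}|_{K_{\theta_{\xi}}}$. Then $T$ is automatically a completely non-unitary contraction on the Hilbert space $K_{\theta_{\xi}}$, and since $\theta_{\xi}$ has no zeros in $\bbd$, one has $\mathrm{Sp}(T)=\sigma(\theta_{\xi})=\mathrm{supp}(\mu)=E_{\xi}$, in particular $T$ is invertible. The norms $\|T^{-n}\|$ can be computed or estimated through the standard model-theoretic formula that ties them to the growth of $1/\theta_{\xi}$ on the disk, or equivalently to the reproducing kernels of $K_{\theta_{\xi}}$ at points approaching $E_{\xi}$.

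Third, the upper bound $\|T^{-n}\|=O(e^{n^{b(\xi)}})$ should come out by a direct computation: the geometric scaling of $\mu$ forces $|\theta_{\xi}(re^{it})|\gtrsim \exp(-d(e^{it},E_{\xi})^{-\delta})$ with the same exponent $\delta$ as in the proof of Lemma \ref{lem op}, and extremizing the resulting bound over $r\in(0,1)$ produces precisely the critical exponent $b(\xi)=\frac{\log(1/\xi)-\log 2}{2\log(1/\xi)-\log 2}$, which is exactly the value obtained by optimizing $\delta$ against the gap summability threshold $\log 2/\log(1/\xi)$ coming from \eqref{integral}.

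The main obstacle will be the sharp lower bound: for every $\beta<b(\xi)$ one needs a sequence $n_{k}\to+\infty$ with $\|T^{-n_{k}}\|/e^{n_{k}^{\beta}}\to+\infty$. I expect this to require testing $T^{-n}$ against explicit reproducing kernels $k_{z}\in K_{\theta_{\xi}}$ at points $z$ lying in contiguous arcs of level $k$ and chosen at the critical radius $1-|z|\asymp \xi^{k}$, and then reading off the lower bound from the known two-sided control on $|\theta_{\xi}(z)|$ for such $z$. The delicate point, as usual in sharpness questions of this type, is the simultaneous calibration of the level $k$, the radius $1-|z|$, and the integer $n$ so that the resulting lower estimate matches the upper estimate up to subpolynomial factors; a naive balancing produces an exponent strictly less than $b(\xi)$, and it is exactly the arithmetic of the constant $b(\xi)$ that guarantees that the optimal choice works.
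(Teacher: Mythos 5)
Your construction is exactly the paper's: the compressed shift $T = P_{\calh} M_z |_{\calh}$ on the model space $\calh = H^2 \ominus V H^2$, where $V$ is the singular inner function attached to the Cantor--Lebesgue measure $dL$ of $E_\xi$. For the first display the paper simply cites \cite[Theorem 2.2]{ERZ} rather than recomputing $\|T^{-n}\|$ from the boundary modulus of $V$; note also a discrepancy in your write-up, where you assert the bound $\|T^{-n}\|=O(e^{n^{b(\xi)}})$ while the proposition as printed reads $O(n^{b(\xi)})$ --- worth flagging, but in either case ERZ supplies the needed estimate together with $\limsup_n \|T^{-n}\| = \infty$.

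Where you genuinely diverge from the paper, and where your sketch has a real gap, is the sharpness claim $\limsup_{n\to\infty} \|T^{-n}\|/e^{n^\beta} = +\infty$ for all $\beta < b(\xi)$. You propose a direct lower bound obtained by testing $T^{-n}$ on reproducing kernels $k_z \in K_V$ with $z$ near contiguous arcs of level $k$ and calibrating $1-|z| \asymp \xi^k$ against $n$; but you yourself concede that a ``naive balancing produces an exponent strictly less than $b(\xi)$'' and leave unexplained how the correct calibration is to be found. This is precisely where the proof has to succeed or fail, and nothing in the proposal forces the exponent to reach $b(\xi)$. The paper avoids this computation entirely by arguing by contradiction and reusing the machinery of Lemma~\ref{lem op} and Theorem~\ref{operateur}: if $\sup_n \|T^{-n}\|/e^{n^\beta} < \infty$ for some $\beta < b(\xi)$, then the outer function $f \in A^\infty(\bbd)$ built in Lemma~\ref{lem op} lies in $J_{\omega_\beta}(E_\xi)$, the functional calculus yields $f(T)=0$, and identifying $f(T)$ with $g \mapsto P_{\calh}(fg)$ applied to $g=(V-V(0))/z$ forces $f \in VH^2$, which is impossible for a nonzero outer function against a nontrivial singular inner $V$. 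This indirect route converts the quantitative lower-bound problem into a qualitative algebraic obstruction, and it sidesteps the delicate estimates your plan would require. If you wish to retain the direct approach, you would need at minimum a two-sided estimate on $\log|V^{-1}|$ near $E_\xi$ sharp enough to identify the exponent $b(\xi)$ on the nose, together with a quantitative link from that estimate to $\|T^{-n}\|$ via the model-space kernels --- none of which is present in the proposal.
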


\begin{proof}
Let $L$ be the classical  Lebesgue function associated to $E_\xi$ and let $dL$ be 
 the unique positive measure  on $[0, 2\pi]$ such that $\int_a^bdL=L(b)-L(a)$ for $0\leq a\leq b\leq 2\pi$ (see \cite[Chapter 1]{KaSa} and \cite{ERZ}). 
 We set
$$
V(z)=\exp \biggl({1\over 2\pi}\int_0^{2\pi}\frac{z+e^{it}}{z-e^{it}}dL(t)\biggr) \ \ (|z|<1),
$$
the inner function associated to the measure $dL$.  Denote  by $H^2$ and $H^\infty$ the usual Hardy spaces on the unit disk $\bbd$. 
Set ${\mathcal {H}}=H^2\circleddash VH^2$. Let $P_{{\mathcal {H}}}$ be the orthogonal projection from $H^2$ onto ${\mathcal {H}}$ and $T :{\mathcal {H}}\to {\mathcal {H}}$, $g\to P_{{\mathcal {H}}}(\alpha g)$, where $\alpha$ is the identity function $z\to z$; T is a contraction and its spectrum is contained in $E_\xi$. It is shown in \cite[Theorem 2.2]{ERZ} that $\big\| T^{-n} \big\| = O \big( n^{b(\xi)} \big), \,n \rightarrow +\infty$ and $\limsup_{n\to\infty}\|T^{-n}\|=\infty$. 

Assume  that for somme $\beta < b(\xi)$, $\sup_{n\geq 0}\| T^{-n} \|/e^{n^\beta}<+\infty$. We set 
$$\dsp \omega_\beta (n)=1,\ n\geq 0 \ \ \mathrm{ and }\ \ \omega_\beta (-n)=e^{n^\beta},\ n>0.$$
As in the proof of Lemma \ref{lim0},   Using (\ref{integral}),  \cite[Lemma 7.2 and Lemma 7.1]{Est2} we show that there exists a nonzero outer fonction $f\in A^\infty (D)$ such that  $f\in J_{\omega_\beta} (E_{1/q})$. By the same argument as in the proof of Theorem \ref{operateur}, we get that $f(T)=0$. On the other hand  the operator $f(T)$ is defined by $g\to  P_{{\mathcal {H}}}(f g)$. Finaly, for $g=\frac{V-V(0)}{z}\in\mathcal{H}$, the equality $f(T)g=0$ implies that $f\in VH^2$.


\end{proof}

Now we will  prove that   Theorem \ref{operateur} cannot be extended to all the sets $E_\xi$. More precisely we show that it is false for all $E_\xi$ such that $1/\xi$ is not a Pisot number. For this we introduce the definition below.
We assume that $s$ is a nonnegative integer. We set  $A_s^+(\bbt)=\{f\in A_s(\bbt) :  \ \hat f (n)=0, \ n<0\}$.  A closed subset $E$ of $\bbt$ is said to be an interpolating set of order  $s$ for $A_s(\bbt)$, if for every function $f\in A_s(\bbt)$, there exists  $g\in A_s^+(\bbt)$ such that 
$$
f^{(k)}(z) = g^{(k)}(z), \ \ z\in E, \ \ 0\leq k\leq s. 
$$

 It is shown in \cite{Zar2}, that $E_\xi$ is an interpolating set of order  $s$ for $A_s
 (\bbt)$ if and only if ${{1}/{\xi}}$ is a  Pisot number.
 
\begin{prop}\label{sharp 2} Assume that $s$ is a nonnegative integer and that $1/\xi$ is not a Pisot number. Then there exists a bounded operator $T$ on a Banach space such that 
$$
\big\| T^{n} \big\| = O \big( n^{s} \big), \,n \rightarrow +\infty,
$$
$$
\forall \ r>s+{\frac {3}{2}}{\frac {\log 2}{\log {1/\xi}}},
\  \big\| T^{-n} \big\| = O \big( n^{r} \big), \, n \rightarrow +\infty,
$$
and 
$$\limsup_{n\to+\infty}\| T^{-n} \|/n^s = +\infty.$$

\end{prop}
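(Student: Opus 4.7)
\medskip

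\noindent\textbf{Plan.} The plan is to realize $T$ as the operator of multiplication by $z$ on a quotient of a suitable weighted Beurling algebra, and to derive the lower bound from the failure of $s$-th order interpolation guaranteed by the non-Pisot hypothesis through \cite{Zar2}.

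\smallskip

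\noindent\emph{Setup.} Write $d=\log 2/\log(1/\xi)$ and define the weight $\omega$ on $\bbz$ by
$\omega(n)=(1+n)^{s}$ for $n\geq 0$ and $\omega(-n)=(1+n)^{s+3d/2}$ for $n>0$. A direct case check shows $\omega$ is submultiplicative, and polynomial growth ensures the regularity condition $\sum_{n\in\bbz}\log\omega(n)/(1+n^{2})<+\infty$, so $A_{\omega}(\bbt)$ is a regular Banach algebra. Set $X=A_{\omega}(\bbt)/I_{\omega}(E_{\xi})$ and let $T$ be multiplication by $z$ on $X$.

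\smallskip

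\noindent\emph{Spectrum and upper bounds.} Since $A_{\omega}(\bbt)$ is regular and $I_{\omega}(E_{\xi})$ is the maximal closed ideal of hull $E_{\xi}$, the argument of \cite[Théorème 2.4]{Zar1} (which is already invoked in the proof of Theorem \ref{operateur}) gives $\mathrm{Sp}(T)=E_{\xi}\subset\bbt$, so in particular $T$ is invertible. The quotient norm yields $\|T^{n}\|\leq\|z^{n}\|_{\omega}=\omega(n)$, hence $\|T^{n}\|=O(n^{s})$ for $n\geq 0$ and $\|T^{-n}\|\leq(1+n)^{s+3d/2}=O(n^{r})$ for every $r>s+\tfrac{3}{2}\tfrac{\log 2}{\log(1/\xi)}$.

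\smallskip

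\noindent\emph{Lower bound, by contradiction.} Suppose $\|T^{-n}\|=O(n^{s})$. Combined with $\|T^{n}\|=O(n^{s})$, the series $\sum_{n\in\bbz}\widehat f(n)T^{n}$ converges absolutely in $\mathcal L(X)$ for every $f\in A_{s}(\bbt)$, giving a bounded algebra homomorphism $\Phi:A_{s}(\bbt)\to\mathcal L(X)$ with $\Phi(z)=T$. Applying $\Phi(f)$ to the cyclic vector $\bar 1=1+I_{\omega}(E_{\xi})$ furnishes a bounded linear map $\Psi:A_{s}(\bbt)\to X$, $f\mapsto\Phi(f)\bar 1$, which on the subalgebra $A_{s}^{+}(\bbt)$ coincides with the natural quotient map coming from the inclusion $A_{s}^{+}(\bbt)\hookrightarrow A_{\omega}(\bbt)$. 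By unwinding the quotient using the $s$-jet structure of $I_{\omega}(E_{\xi})$ on $E_{\xi}$, this extension forces the following: for every $f\in A_{s}(\bbt)$ there exists $g\in A_{s}^{+}(\bbt)$ with $f^{(k)}(z)=g^{(k)}(z)$ for all $z\in E_{\xi}$ and $0\leq k\leq s$. That is, $E_{\xi}$ is an interpolating set of order $s$ for $A_{s}(\bbt)$, which by \cite{Zar2} forces $1/\xi$ to be a Pisot number, contradicting our hypothesis.

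\smallskip

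\noindent\emph{Main obstacle.} The delicate step is the passage, in the last paragraph, from the boundedness of the extension $\Psi:A_{s}(\bbt)\to X$ to a genuine interpolation statement at the level of $s$-jets on $E_{\xi}$. One has to identify $X$ (possibly after further quotienting by the closed subspace of elements whose $s$-jet vanishes on $E_{\xi}$) with a space that faithfully encodes $s$-jets and then verify that the image of $\Psi$ lies in the same jet-space as the image of $A_{s}^{+}$. This is the point at which the specific choice of weight $\omega$ and the regularity of $A_{\omega}(\bbt)$ enter essentially, and it is the technical heart of the argument.
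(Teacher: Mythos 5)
Your construction is genuinely different from the paper's, and the difference is exactly where the argument breaks. The paper works on the \emph{one-sided} quotient $A_{s}^{+}(\bbt)/I_{s}^{+}(E_{\xi})$ (analytic functions vanishing with their first $[s]$ derivatives on $E_{\xi}$), takes $T$ to be multiplication by $\pi(\alpha)$, and then the key point is that $\|\pi(\alpha)^{-n}\|$ is \emph{by definition} the infimum of $\|g\|_{s}$ over those $g\in A_{s}^{+}(\bbt)$ whose $[s]$-jet on $E_{\xi}$ matches that of $z^{-n}$. The equivalence ``$E_{\xi}$ is interpolating of order $s$ $\Longleftrightarrow$ $\|\pi(\alpha)^{-n}\|=O(n^{s})$'' is then immediate, and the lower bound follows from \cite[Theorem 6]{Zar2} while the upper bound follows from the proof of \cite[Theorem 8]{Zar2}.

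You instead take $X=A_{\omega}(\bbt)/I_{\omega}(E_{\xi})$ for a \emph{two-sided} weighted algebra with $\omega(-n)=(1+n)^{s+3d/2}$. This buys you a trivial upper bound $\|T^{-n}\|\leq\omega(-n)$, but the lower bound no longer follows. The quantity $\|T^{-n}\|$ on your $X$ is an infimum over representatives in $A_{\omega}(\bbt)$, which strictly contains $A_{s}^{+}(\bbt)$; since you are infimizing over a larger class, your $\|T^{-n}\|$ is \emph{a priori smaller} than the paper's, so the desired unboundedness is harder, not easier, to establish. Concretely, your contradiction argument would only yield: for every $f\in A_{s}(\bbt)$ there is some $h\in A_{\omega}(\bbt)$ with the same $[s]$-jet on $E_{\xi}$; it does \emph{not} yield an $h\in A_{s}^{+}(\bbt)$, and failure of $A_{s}^{+}$-interpolation does not preclude $A_{\omega}$-interpolation. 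The ``Main obstacle'' paragraph you flag is therefore not a postponed technicality but the actual missing content of the proof: nothing in the argument forces $\Psi(f)$ to be represented by an analytic function, and without that, the appeal to \cite{Zar2} is unavailable. The fix is to replace your $X$ by the one-sided quotient $A_{s}^{+}(\bbt)/I_{s}^{+}(E_{\xi})$ as in the paper, where the connection to interpolation is built in and the non-trivial upper bound then has to be imported from \cite[Theorem 8]{Zar2} rather than read off the quotient norm.
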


\begin{proof}
 We denote by  $\pi$ the canonical  surjection from $A_p^+(\bbt)$ onto $A_p^+(\bbt)/I_s^+(E_\xi)$, where 
 $I_s^+(E_\xi)=\{f\in A_s(\bbt),\ f=0\ \mathrm{on}\ E_\xi\}$. Note that since $E_\xi$ is a perfect set $I_s^+(E_\xi)$ is also the set of all functions $f$ in $A_p^+(\bbt)$ such that for all $0\leq k\leq s$, $f^{(k)}=0$ on $E_\xi$.  Let $\alpha$ be the identity function $z\to z$. It is easily seen that $E_\xi$ is an interpolating set of order  $s$ for  $A_s^+(\bbt)$ if and only if we have  
$$
\|\pi(\alpha)^{-n}\|_p=O(n^s),\ \ \ n\to +\infty.
$$
Let $T$ be the operator defined on $A_p^+(\bbt)/I_s^+(E_\xi)$ by $\pi (f)\to \pi(\alpha f)$. We have $\mathrm{Sp} (T)=E_\xi$ and $\big\| T^{n} \big\| =\big\| \pi(\alpha)^{n} \big\|= O \big( n^{s} \big), \, n \rightarrow +\infty.$

It is shown in \cite[Theorem 6]{Zar2} that $E_\xi$ is not  an interpolating set of order  $s$ for  $A_s^+(\bbt)$. Thus 
$$
\limsup_{n\to+\infty}\| T^{-n} \|/n^s = \limsup_{n\to+\infty}\| \pi(\alpha)^{-n} \|/n^s = +\infty.
$$
Since $E_\xi$ satisfies condition (\ref{integral}) it follows from the proof of \cite[Theorem 8]{Zar2} that for every $\delta<1- \frac{\log 2}{\log {1/\xi}}$ and every $r>s+\frac{3}{2}(1-\delta)$,
$$
\big\| \pi(\alpha)^{-n} \big\|= O \big( n^{r} \big), \ n \rightarrow +\infty.
$$
Thus for every $r>s+{\frac {3}{2}}{\frac {\log 2}{\log {1/\xi}}}$,
 $\big\| T^{-n} \big\| = O \big( n^{r} \big), \, n \rightarrow +\infty.$
 
\end{proof}

\section{Spectral synthesis for Cantor sets in weighted Beurling algebras.} 

Let $s$ be a nonnegative real number. We denote by $[s]$ the integral part of $s$, that is  the nonnegative integer such that $[s] \leq s < [s]+1$. For $f$ in $A_{s}(\bbt)$, we denote by   $f^{(j)}$, $0\leq j\leq [s]$,  the $j^{th}$--derivative of $f$  with respect to the variable $t$, that is $f^{(j)} \big( e^{it} \big)=\frac{\partial^j}{\partial t^j}\big(f\big( e^{it} \big)\big).$

To prove the main theorem of this section (Theorem \ref{TheorSynthese}), we show first that 
  $E_{{1}/{q}}$ satisfies spectral synthesis in $A_{s}(\bbt)$. To do this,  we approximate any function $f$ in $A_{s}(\bbt)$ such that $f^{(j)}(1)=0$ $(0\leq j \leq [s])$, by a sequence of functions $(f_{N,[s]})_{N \geq 1}$ in $A_{s}(\bbt)$ which are piecewise polynomial and such that each function $f_{N,[s]}^{([s])}$ interpolates $f^{([s])}$ at the points $\dsp e^{\frac{2ik \pi}{N}}$, for all $k \in \{0, \ldots , N-1\}$ (for $s=0$, see the Herz criterion in \cite{Her} and \cite[p. 58--59]{Kaha1}). The functions $f_{N,[s]}$ are  defined by
\begin{eqnarray}
f_{N,[s]}(e^{it}) =  \sum_{k=0}^{N-1} \Delta_{[s],\frac{2 \pi}{N}} \big( t - \frac{2k \pi}{N} \big) f^{([s])} \big( e^{\frac{2ik \pi}{N}} \big), \label{deffn}
\end{eqnarray}
where $\Delta_{[s], \e}$ (for  $0 < \e \leq \pi$) is the  $2 \pi$-periodical function given on $[- \pi , \pi]$ by the formula
\begin{eqnarray*}
\Delta_{[s], \e}(t) = \left\{ \begin{array}{ll}
\dsp (-1)^{[s]}\frac{(\e - t)^{[s]+1}}{([s]+1)! \, \e}    & \textrm{ for } t \in [0,\e] \\
0 & \textrm{ for } t \in [\e, \pi] 
\end{array} \right. ,
\end{eqnarray*}
and $$\Delta_{[s], \e}(t) = (-1)^{[s]} \Delta_{[s], \e}(-t) \ \textrm{ for }\  t \in [-\pi,0].$$

\begin{lemme} \label{controle} Assume that $0\leq s <1$. 
There exists a positive constant $K$ such that for every function $f\in A_{s}(\bbt)$      and every integer $N \geq 1$,   
$$
\| f_{N,0} \|_{s} \leq K \| f \|_{s}, \quad \big( f \in A_{s}(\bbt) \big).
$$
\end{lemme}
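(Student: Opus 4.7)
The plan is Fourier-analytic. I would first write $f_{N,0}$ as the convolution on $\bbt$ of the hat function $\Delta=\Delta_{0,2\pi/N}$ with the discrete measure $\sum_{k=0}^{N-1}f(e^{2ik\pi/N})\delta_{2k\pi/N}$. One computes $\widehat{\Delta}(0)=1/N$ and $\widehat{\Delta}(n)= N\sin^{2}(\pi n/N)/(\pi^{2}n^{2})$ for $n\ne 0$; combining this with the orthogonality of characters on $\bbz/N\bbz$ yields the clean formula
$$
\widehat{f_{N,0}}(n)\;=\;a_{n}\sum_{j\in\bbz}\widehat{f}(n+jN),\qquad a_{n}=\frac{\sin^{2}(\pi n/N)}{(\pi n/N)^{2}},\ a_{0}=1.
$$

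Applying the triangle inequality and interchanging the order of summation gives $\|f_{N,0}\|_{s}\le\sum_{m\in\bbz}|\widehat{f}(m)|\,S_{m}$ with
$$
S_{m}\;=\;\sum_{n\equiv m\,(\mathrm{mod}\,N)}|a_{n}|(1+|n|)^{s}.
$$
The lemma then reduces to the uniform kernel estimate $S_{m}\le K(1+|m|)^{s}$ with $K$ independent of $N$ and $m$. For $m\in N\bbz$ this is immediate since $a_{kN}=0$ for $k\ne 0$. For $m\notin N\bbz$, I would write $m=qN+r$ with $|r|\le N/2$ and parametrise the frequencies as $n=kN+r$; the identity $\sin(\pi k+\pi r/N)=(-1)^{k}\sin(\pi r/N)$ yields $|a_{kN+r}|=\sin^{2}(\pi r/N)/(\pi(k+r/N))^{2}$. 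The $k=0$ term is controlled by $(1+|r|)^{s}\le(1+|m|)^{s}$ via $\sin^{2}(\pi r/N)\le(\pi r/N)^{2}$, while for $k\ne 0$ the bounds $|kN+r|\ge|k|N/2$ (valid since $|r|\le N/2$) and $\sin^{2}(\pi r/N)\le\min(1,(\pi r/N)^{2})$ reduce each summand to a constant multiple of $|k|^{s-2}\min(N^{s},r^{2}N^{s-2})$.

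The main obstacle is the uniformity in $N$: one has to exploit the vanishing of $\sin(\pi r/N)$ at $r=0$ to prevent the $k\ne 0$ tail from dominating $(1+|m|)^{s}$ when $|m|$ is much smaller than $N$. This is precisely where the hypothesis $s<1$ enters, through the convergence of $\sum_{k\ne 0}|k|^{s-2}$; a short case distinction ($q=0$ versus $|q|\ge 1$) then delivers the desired bound $S_{m}\le K(1+|m|)^{s}$, completing the proof.
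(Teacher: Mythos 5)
Your proof is correct and takes essentially the same route as the paper: write $f_{N,0}$ as a convolution with the triangle kernel, exploit the periodicity $\sin(\pi(k+r/N))=\pm\sin(\pi r/N)$, control the tail via $\sum_{k\neq 0}|k|^{s-2}<\infty$ (which is exactly where $s<1$ enters), and split cases according to whether $|m|$ is below or above roughly $N/2$ --- this is the paper's Case~1/Case~2 dichotomy. The only difference is cosmetic: you state the bound as a uniform kernel estimate $S_m\le K(1+|m|)^s$, which lets you conclude for general $f\in A_s(\bbt)$ directly instead of first proving it for each exponential $e^{int}$ and then extending by density as the paper does, but the underlying computations coincide.
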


\begin{proof}First, we will show this inequality for $f(e^{it}) = e^{int} \, (n \in \bbz)$. We have $[s]=0$ and 
\begin{eqnarray*}
\Delta_{0, \e}(t) = \left\{ \begin{array}{ll}
\dsp \frac{(\e - |t|)}{ \e}    & \textrm{ for } t \in [-\e,\e] \\
0 & \textrm{ for } t \in [\e, \pi] 
\end{array} \right. 
\end{eqnarray*}
The Fourier coefficients of $\Delta_{0, \e}$ are : 
$$ 
\widehat{\Delta_{0,\e}}(m)  = \frac{\e}{2 \pi} \frac{\sin^2 (m\e/2)}{(m\e/2)^2} \, \textrm{ if } \, m\ne 0,
 \, \textrm{ and } \, \widehat{\Delta_{0,\e}}(0)=\frac{\e}{2 \pi}.
$$
Let $f(e^{it})= e^{int}$. For $N \geq 1$,  we set 
$$D_{N} = \dsp \sum_{k=0}^{N-1} f \big( e^{\frac{2ik \pi}{N}} \big) \delta_{\frac{2k \pi}{N}}=\dsp \sum_{k=0}^{N-1}  e^{\frac{2ikn \pi}{N}}  \delta_{\frac{2k \pi}{N}},
$$
where $\delta_{a}$ denotes the Dirac measure at $a$. We have
\begin{eqnarray*}
\widehat{D_{N}}(m) = \left\{ \begin{array}{ll}
\dsp \frac{N}{2 \pi}  & \textrm{ if } m = n + k N \quad (k \in \bbz) \\
0 & \textrm{ otherwise }
\end{array} \right. 
\end{eqnarray*}
Therefore
\begin{eqnarray}\label{fourier coeff}
\widehat{f_{N,0}}(m)  =  2 \pi  \widehat{D_{N}}(m) \widehat{\Delta_{0,\frac{2 \pi}{N}}}(m) =
 \left\{ \begin{array}{lll}
 \dsp \frac{\sin^2 (m\pi/N)}{(m\pi/N)^2} & \textrm{ if } m = n + k N \ne 0 \ \textrm{ for\ some }k \in \bbz \\
 1 & \textrm{ if } m = n + k N = 0 \ \textrm{ for\ some }k \in \bbz \\
 0 & \textrm{ otherwise }
\end{array} \right.
\end{eqnarray}

We will consider two cases :

\hspace*{0.5cm} Case 1: $N \geq 2|n|$. \\

Let $m = n + k N$ for some $k \in \bbz$. Note that since $N\geq 2|n|$, $m\ne 0$. Thus we have 
\begin{eqnarray}\label{major-1}
|\widehat{f_{N,0}}(m)|(1+|m|)^{s} & = & \frac{\sin^2 \big((n/N+k)\pi)\big)}{(n/N+k)^2\pi^2}(1+|n + k N|)^{s} \nonumber\\
 & = & \frac{\sin^2 \big((n/N)\pi)\big)}{(n/N+k)^2\pi^2}(1+|n + k N|)^{s} \nonumber\\
& \leq & \frac{(n/N)^{2-s} }{(|k|-1/2)^2}(|n|/N+n^2/N + |kn|)^{s} \nonumber\\
& \leq & \frac{1}{2^{2-s}} \frac{1}{(|k|-1/2)^2}(1/2+|n|/2 +  |kn|)^{s} \nonumber\\
& \leq & \frac{1}{2^{2-s}}\frac{(|k|+1/2)^s}{(|k|-1/2)^2} (1+|n|)^s.
\end{eqnarray}
Therefore 
\begin{eqnarray} \label{major0}
\dsp\big\| f_{N,0} \big\|_{s} & = & \sum_{k = -\infty}^{+\infty} \big| \widehat{f_{N,0}}(n + k N) \big| (1+|n + k N|)^{s} \nonumber\\
& \leq & \frac{1}{2^{2-s}}\sum_{k = -\infty}^{+\infty} \frac{(|k|+1/2)^s}{(|k|-1/2)^2}  (1+|n|)^{s} \, = \, \frac{1}{2^{2-s}}\sum_{k = -\infty}^{+\infty} \frac{(|k|+1/2)^s}{(|k|-1/2)^2}  \big\| f \big\|_{s}.
\end{eqnarray}

\hspace*{0.5cm} Case 2: $N \leq 2 |n|$. \\

For $|m|<N$,
$$
\big|\widehat{f_{N,0}}(m) \big| (1+|m|)^{s} \leq (1+N)^{s}\leq 2^s (1+|n|)^{s}.
$$
We remark that there are at most two integers $m$ of the form $m = n + k N$ with $|m| < N$. Thus 
\begin{eqnarray} \label{major1}
\sum_{|m|<N}\big| \widehat{f_{N,0}}(m) \big| (1+|m|)^{s} & \leq 2^{1+s} (1+|n|)^{s}.
\end{eqnarray}

Assume now that $|m|\geq N$ with  $m = n + k N$ and  $k \in \bbz$. We note that 
 in this case $\dsp \big| \frac{n}{N} + k \big| = \frac{|m|}{N} \geq 1$. We have 
 \begin{eqnarray*}
 \widehat{f_{N,0}}(m) \big| (1+|m|)^{s} & = & \frac{\sin^2 (n\pi/N)}{(n/N+k)^2\pi^2}(1+|n + k N|)^{s}\\
 & \leq &  \frac{1}{(n/N+k)^2\pi^2}N^s (1/N+ |n/N+k|)^s \\
 & \leq &  \frac{2^{2s}}{\pi^2|n/N+k|^{2-s}} |n|^s.
 \end{eqnarray*}
 Then we get 
 \begin{eqnarray} \label{major2}
 \sum_{|m|\geq N}\big| \widehat{f_{N,0}}(m) \big| (1+|m|)^{s} & \leq &  \sum_{k\in\bbz : \, |n + k N| \geq N} \widehat{f_{N,0}}(n + k N) \big| (1+|n + k N|)^{s} \nonumber\\
& \leq &  \sum_{k\in\bbz : \, |n/N + k | \geq 1} \frac{2^{2s}}{\pi^2|n/N+k|^{2-s}} |n|^s \nonumber\\
& \leq &  \frac{2^{2s+1}}{\pi^2}  \sum_{k\geq 1} \frac{1}{k^{2-s}} |n|^s. 
\end{eqnarray}
 Combining (\ref{major1}) and (\ref{major2}) we obtain 
 
 \begin{eqnarray} \label{major3}
\|f_{N,0}\|_s\leq \max\left(2^{1+s},\frac{2^{2s+1}}{\pi^2}  \sum_{k\geq 1}  \frac{1}{k^{2-s}} \right)(1+|n|)^{s}= \max\left(2^{1+s},\frac{2^{2s+1}}{\pi^2}  \sum_{k\geq 1}  \frac{1}{k^{2-s}}\right)\|f\|_s
 \end{eqnarray}
Finally, we deduce from (\ref{major0}) and (\ref{major3}) that there exists a positive constant $K$  such that for all integers $N \geq 1$ and $n\in\bbz$,
$$\|f_{N,0}\|_s\leq K \|f\|_s,$$
where $f(e^{it}) = e^{int}$. We deduce easily that this inequality is still true for all trigonometric polynomials, then, by density, for all functions $f$ in $A_{s}(\bbt)$. 
 
\end{proof} 
 
 \begin{lemme} \label{limitpoly}  Assume that $0\leq s <1$. Then
for every function $f\in A_{s}(\bbt)$, we have 
\begin{eqnarray*}
\lim_{N \rightarrow +\infty} \big\| f_{N,0} - f \big\|_{s} = 0.
\end{eqnarray*}
\end{lemme}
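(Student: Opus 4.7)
The plan is to combine the uniform bound from Lemma \ref{controle} with pointwise convergence on a dense subset. Since $f \mapsto f_{N,0}$ is linear (it depends linearly on the finitely many values $f(e^{2ik\pi/N})$) and Lemma \ref{controle} shows $\|f_{N,0}\|_s \le K \|f\|_s$ uniformly in $N$, a standard $\varepsilon/3$-argument reduces the claim to showing $\|P_{N,0} - P\|_s \to 0$ for trigonometric polynomials $P$, and by linearity it further suffices to handle the single monomial $f(e^{it}) = e^{int}$.

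For this case, formula (\ref{fourier coeff}) from the proof of Lemma \ref{controle} gives the Fourier coefficients of $f_{N,0}$ explicitly: they are supported on the arithmetic progression $\{n+kN : k \in \bbz\}$, with value $\sin^2((n/N+k)\pi)/((n/N+k)\pi)^2 = \sin^2(n\pi/N)/((n/N+k)\pi)^2$ (using $\sin(n\pi/N+k\pi) = \pm \sin(n\pi/N)$). I would then decompose $\|f_{N,0} - f\|_s$ into the $k=0$ contribution and the tail $k\neq 0$. The $k=0$ term is
\[
\bigl| \widehat{f_{N,0}}(n) - 1 \bigr| (1+|n|)^s = \left| \frac{\sin^2(n\pi/N)}{(n\pi/N)^2} - 1 \right| (1+|n|)^s,
\]
which tends to $0$ as $N \to +\infty$ for fixed $n$.

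For the tail, fix $n$ and let $N \ge 2|n|$. For $k \neq 0$ one has $|n/N + k| \ge |k|/2$ and $1+|n+kN| \le C|k|N$ for some absolute constant $C$. Using the key inequality $\sin^2(n\pi/N) \le (n\pi/N)^2$ to extract a small factor yields
\[
\bigl|\widehat{f_{N,0}}(n+kN)\bigr|(1+|n+kN|)^s \le \frac{(n\pi/N)^2}{((|k|/2)\pi)^2}\, C (|k|N)^s \le \frac{C' n^2}{N^{2-s}} \cdot \frac{1}{|k|^{2-s}},
\]
and summing over $k \ne 0$ gives a bound $O(n^2/N^{2-s})$, which tends to $0$ as $N \to +\infty$ because $\sum_{k\ge 1} k^{s-2} < +\infty$ for $s < 1$. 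This delivers $\|f_{N,0} - f\|_s \to 0$ for each monomial, and density plus uniform boundedness finishes the proof.

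The only delicate point is the tail estimate: one cannot afford to just use $|\widehat{f_{N,0}}(n+kN)| \le 1$ because the weight $(1+|n+kN|)^s$ grows like $(|k|N)^s$. The essential trick is therefore to spend the factor $\sin^2(n\pi/N) \lesssim (n/N)^2$ to beat this growth, which is precisely what makes the condition $s < 1$ indispensable (the exponent $2-s$ must remain $>1$ for the series in $k$ to converge).
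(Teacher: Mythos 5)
Your proposal is correct and follows essentially the same route as the paper: reduce by density and the uniform bound of Lemma \ref{controle} to the monomials $e^{int}$, split the Fourier sum of $f_{N,0}-f$ into the $k=0$ term and the tail $k\neq 0$, estimate the $k=0$ term directly, and for the tail spend the factor $\sin^2(n\pi/N)\lesssim (n/N)^2$ against the weight $(1+|n+kN|)^s$ to obtain an $O(n^2/N^{2-s})$ bound whose convergence uses $2-s>1$. Your remark at the end correctly identifies the role of the hypothesis $s<1$, which is exactly where the paper's argument leans on it as well.
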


\begin{proof}
 
 First, we prove the proposition for $f(e^{it}) = e^{int}$ ($n \in \bbz$). If $n=0$, we deduce from (\ref{fourier coeff})  that for all $N \geq 1$, $\widehat{f_{N,0}} (0) = 1$ and $\widehat{f_{N,0}} (m) = 1$ for $m\ne 0$, so that $f_{N,0} = f$.
 
  Now, we suppose that $n \neq 0$. Let $N \geq 2|n|$ be fixed and $m = n + k N$ ($k \in \bbz$). If $k \neq 0$, then $\dsp |m| \geq \big( |k| - \frac{1}{2} \big) N$. We have
\begin{eqnarray}\label{lim0}
|\widehat{f_{N,0}}(m)|(1+|m|)^{s} & = & \frac{\sin^2 \big((m/N)\pi)\big)}{(m/N)^2\pi^2}(1+|m|)^{s} \nonumber\\
 & = & \frac{\sin^2 \big((n/N)\pi)\big)}{(m/N)^2\pi^2}(1+|m|)^{s} \nonumber\\
& \leq & \frac{2^s n^2 }{m^{2-s}} \nonumber\\
& \leq & \frac{2^s n^2 }{(|k|-1/2)^{2-s}N^{2-s}}.
\end{eqnarray} 

On the othar hand, if $k =0$, we have $m=n$ and 
\begin{eqnarray*}
 |\widehat{f_{N,0}}(n)-1| & = & \left|\frac{\sin^2 \big((n/N)\pi)\big)}{(n/N)^2\pi^2}-1\right| \\
  & \leq & \frac{\pi^2}{3} (n/N)^2. 
\end{eqnarray*}  
Thus 
\begin{eqnarray}\label{lim1}
 |\widehat{f_{N,0}}(n)-1| (1+|n|)^s \leq \frac{\pi^2}{3} (n/N)^2(1+|n|)^s \leq \frac{\pi^2}{3}\frac{n^2}{N^{2-s}}.
\end{eqnarray}
It follows from (\ref{lim0}) and (\ref{lim1}) that 
\begin{eqnarray*}
\big\| f_{N,0} - f \big\|_{s}  \leq \left( \frac{\pi^2 n^2}{3} + \sum_{k\in{\bbz\setminus \{0\}}}\frac{2^s n^2 }{(|k|-1/2)^{2-s}}\right) \frac{1}{N^{2-s}},
\end{eqnarray*}
which implies
\begin{eqnarray*}
\lim_{N \rightarrow +\infty} \big\| f_{N,0} - f \big\|_{s} = 0. 
\end{eqnarray*}
Now we consider the general case: let $f$ be in $A_{s}(\bbt)$ and $\e > 0$. There exists $M_{0} > 0$ such that $\dsp \big\| f - g \big\|_{s} \leq \e$, where $\dsp g = \sum_{m=-M_{0}}^{M_{0}} \widehat{f}(m) e^{imt}$. Using the above, we can find $N_{0} > 0$ such that for all $N \geq N_{0}$, $\dsp \big\| g - g_{N,0} \big\|_{s} \leq \e$. For $N \geq N_{0}$, we deduce from lemma~\ref{controle} that
\begin{eqnarray*}
\big\| f - f_{N,0} \big\|_{s} & \leq & \big\| f - g \big\|_{s} + \big\| g - g_{N,0} \big\|_{s} + \big\| g_{N,0} - f_{N,0} \big\|_{s} \\
& \leq & (1+K) \big\| f - g \big\|_{s} + \big\| g - g_{N,0} \big\|_{s} \\
& \leq & (2 + K) \e,
\end{eqnarray*}
which proves that 
\begin{eqnarray*}
\lim_{N \rightarrow +\infty} \big\| f - f_{N,0} \big\|_{s} = 0.  
\end{eqnarray*}
\end{proof} 
 
 \begin{lemme} \label{norme}  Let $s$ be a nonnegative real number and $p=[s]$. If $p\geq 1$, then 
for every function $f\in A_{s}(\bbt)$, we have 
\begin{eqnarray*}
\|f^{(p)}\|_{s-p}\leq \| f \|_{s} \leq \left(2^p+\frac{{(2\pi)^p}}{ (p+1)!}\right) \|f^{(p)}\|_{s-p} + \sum_{j=0}^{p-1}\frac{(2\pi)^j}{(j+1)!} |f^{(j)}(1)|.
\end{eqnarray*}
\end{lemme}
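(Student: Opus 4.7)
The proof is essentially bookkeeping based on the identity $\widehat{f^{(p)}}(n) = (in)^p\widehat{f}(n)$, together with a Taylor expansion to recover $\widehat{f}(0)$, which carries no information from $f^{(p)}$.

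For the left inequality, I would simply observe that $|n|^p(1+|n|)^{s-p} \leq (1+|n|)^s$, so
\begin{equation*}
\|f^{(p)}\|_{s-p} \;=\; \sum_{n\in\bbz}|n|^p|\widehat{f}(n)|(1+|n|)^{s-p} \;\leq\; \sum_{n\in\bbz}|\widehat{f}(n)|(1+|n|)^s \;=\; \|f\|_s.
\end{equation*}

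For the right inequality, I would split $\|f\|_s$ as $|\widehat{f}(0)| + \sum_{n\neq 0}|\widehat{f}(n)|(1+|n|)^s$. For $n\neq 0$ the identity $\widehat{f}(n) = \widehat{f^{(p)}}(n)/(in)^p$ yields
\begin{equation*}
\frac{(1+|n|)^s}{|n|^p} \;=\; (1+|n|)^{s-p}\Bigl(1+\tfrac{1}{|n|}\Bigr)^p \;\leq\; 2^p (1+|n|)^{s-p},
\end{equation*}
so the tail contribution is bounded by $2^p\|f^{(p)}\|_{s-p}$. To handle $\widehat{f}(0)$, set $g(t)=f(e^{it})$ and apply Taylor's formula with integral remainder of order $p$ at $t=0$:
\begin{equation*}
g(t) \;=\; \sum_{j=0}^{p-1}\frac{t^j}{j!}\,f^{(j)}(1) + \int_0^t \frac{(t-u)^{p-1}}{(p-1)!}\,f^{(p)}(e^{iu})\,du.
\end{equation*}
Integrating over $[0,2\pi]$, dividing by $2\pi$, and swapping the order of integration in the remainder (which evaluates to $\int_s^{2\pi}\frac{(t-u)^{p-1}}{(p-1)!}dt = \frac{(2\pi-u)^p}{p!}$) gives
\begin{equation*}
\widehat{f}(0) \;=\; \sum_{j=0}^{p-1}\frac{(2\pi)^j}{(j+1)!}\,f^{(j)}(1) \;+\; \frac{1}{2\pi}\int_0^{2\pi} f^{(p)}(e^{iu})\,\frac{(2\pi-u)^p}{p!}\,du.
\end{equation*}
Estimating the remainder by $\frac{(2\pi)^p}{(p+1)!}\|f^{(p)}\|_\infty$ and then using $\|f^{(p)}\|_\infty \leq \|f^{(p)}\|_{A(\bbt)} \leq \|f^{(p)}\|_{s-p}$ (valid since $s-p\geq 0$) yields
\begin{equation*}
|\widehat{f}(0)| \;\leq\; \sum_{j=0}^{p-1}\frac{(2\pi)^j}{(j+1)!}|f^{(j)}(1)| \;+\; \frac{(2\pi)^p}{(p+1)!}\|f^{(p)}\|_{s-p}.
\end{equation*}
Combining with the tail estimate produces exactly the constants in the statement.

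There is no serious obstacle here; the only mildly delicate point is the $n=0$ Fourier coefficient, which cannot be controlled by $f^{(p)}$ alone (the polynomial $t\mapsto t^{p-1}$ restricted to $\bbt$, for instance, vanishes to order $p$ at no point, but any polynomial correction shows $f^{(p)}$ does not determine $\widehat{f}(0)$), and this is precisely why the lower-order boundary data $f^{(j)}(1)$ must appear on the right-hand side. The Taylor-with-remainder step supplies both the correct linear combination of these boundary values and the constant $(2\pi)^p/(p+1)!$.
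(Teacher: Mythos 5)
Your proof is correct and follows essentially the same approach as the paper: the left inequality by direct termwise comparison of Fourier coefficients, the right inequality by splitting off $\widehat{f}(0)$ and bounding the remaining sum by $2^p\|f^{(p)}\|_{s-p}$, and then expressing $\widehat{f}(0)$ via boundary data $f^{(j)}(1)$ plus an integral in $f^{(p)}$ that is estimated through $\|f^{(p)}\|_\infty\leq\|f^{(p)}\|_{s-p}$. The only cosmetic difference is that you obtain the $\widehat{f}(0)$ identity from Taylor's formula with integral remainder at $t=0$ (giving the kernel $(2\pi-u)^p$), whereas the paper performs repeated integration by parts on $\int_0^{2\pi} f(e^{it})\,dt$ (giving the kernel $t^p$ and alternating signs $(-2\pi)^j$); both identities are valid and yield the same constants.
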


\begin{proof} For $f\in A_{s}(\bbt)$, we have 
 
\begin{eqnarray*} 
 f^{(p)}(e^{it})=\sum_{k\in\bbz} (ik)^p \widehat{f}(k) e^{ikt},
 \end{eqnarray*}
 and 
\begin{eqnarray*} 
\|f^{(p)}\|_{s-p}=\sum_{k\in\bbz}|k|^p (1+|k|)^{s-p} |\widehat{f}(k)|.
\end{eqnarray*} 
Thus 
\begin{eqnarray} \label{norme1}
\|f^{(p)}\|_{s-p}\leq \| f \|_{s} \leq |\widehat{f}(0)|+ 2^p\|f^{(p)}\|_{s-p}.
\end{eqnarray} 
Integration by parts gives 
\begin{eqnarray*} 
\widehat{f}(0)=\sum_{j=0}^{p-1}\frac{(-2\pi)^j}{(j+1)!}f^{(j)}(1)+ \frac{(-1)^p}{2\pi p!}\int_0^{2\pi} t^p f^{(p)}(e^{it}) \, dt.
\end{eqnarray*} 
and 
\begin{eqnarray} \label{norme2}
|\widehat{f}(0)|\leq \sum_{j=0}^{p-1}\frac{(2\pi)^j}{(j+1)!}|f^{(j)}(1)|+ \frac{{(2\pi)^p}}{ (p+1)!}\|f^{(p)}\|_{s-p}.
\end{eqnarray}
Now the lemma follows from (\ref{norme1}) and (\ref{norme2}).
\end{proof}

\begin{lemme} \label{limitpoly1}  Let $s$ be a nonnegative real number and $f$ a function in $A_{s}(\bbt)$ such that $f^{(j)}(1)=0$ for $0\leq j\leq [s]$. Then
\begin{eqnarray*}
\lim_{N \rightarrow +\infty} \big\| f - f_{N,s} \big\|_{s} = 0.
\end{eqnarray*}
\end{lemme}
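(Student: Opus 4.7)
The plan is to apply Lemma~\ref{norme} to the difference $g_N := f - f_{N,s}$, where $p := [s]$. The case $p=0$ coincides with Lemma~\ref{limitpoly}, so I may assume $p \geq 1$. Lemma~\ref{norme} then yields
\[
\|g_N\|_s \leq \left(2^p + \tfrac{(2\pi)^p}{(p+1)!}\right) \|g_N^{(p)}\|_{s-p} + \sum_{j=0}^{p-1} \tfrac{(2\pi)^j}{(j+1)!} \, |g_N^{(j)}(1)|,
\]
so I need to show both the derivative term and each boundary term tend to $0$.

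For the derivative term, the central observation is that $f_{N,s}^{(p)} = (f^{(p)})_{N,0}$. Differentiating the polynomial formula for $\Delta_{p,\varepsilon}$ piece by piece shows that $\Delta_{p,\varepsilon}^{(p)}$ agrees on $(-\varepsilon,\varepsilon)$ with the tent function $\Delta_{0,\varepsilon}$ and vanishes outside; term-by-term differentiation of the sum defining $f_{N,s}$ then identifies $f_{N,s}^{(p)}$ with the piecewise-linear interpolation of $f^{(p)}$ at the points $e^{2ik\pi/N}$. Since $f^{(p)} \in A_{s-p}(\bbt)$ with $s-p \in [0,1)$, Lemma~\ref{limitpoly} delivers $\|g_N^{(p)}\|_{s-p} = \|f^{(p)} - (f^{(p)})_{N,0}\|_{s-p} \to 0$.

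For the boundary terms with $0 \leq j \leq p-1$, I would evaluate
\[
f_{N,s}^{(j)}(1) = \sum_{k=0}^{N-1} \Delta_{p,2\pi/N}^{(j)}(-2k\pi/N)\, f^{(p)}(e^{2ik\pi/N}).
\]
Because $\Delta_{p,2\pi/N}^{(j)}$ is supported in $[-2\pi/N, 2\pi/N]$ and vanishes at $\pm 2\pi/N$ (the factor $(\varepsilon-t)^{p+1-j}$ has a positive-order zero at $t=\varepsilon$ whenever $j \leq p$), only the $k=0$ term survives, giving $\Delta_{p,2\pi/N}^{(j)}(0)\,f^{(p)}(1) = 0$, because the hypothesis $f^{(j)}(1)=0$ for $0 \leq j \leq [s]$ includes the case $j=p$. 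Combined with $f^{(j)}(1)=0$, this gives $g_N^{(j)}(1)=0$ for every $0 \leq j \leq p-1$, and we conclude $\|g_N\|_s \to 0$.

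The main technical difficulty lies in the careful verification of the derivative identity $\Delta_{p,\varepsilon}^{(p)} = \Delta_{0,\varepsilon}$ and the handling of possible discontinuities of $\Delta_{p,\varepsilon}^{(j)}$ at $t=0$ for $j<p$ (whose presence depends on the parity of $p+j$), as well as confirming that $f_{N,s} \in A_s(\bbt)$ so that Lemma~\ref{norme} is applicable. Happily, any such jumps are multiplied by $f^{(p)}(1)=0$ in the boundary computation, so they do not affect the conclusion.
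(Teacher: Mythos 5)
Your proof follows the same route as the paper's: reduce to the case $p=[s]\geq 1$, apply Lemma~\ref{norme} to $f-f_{N,s}$, use the identities $(f_{N,s})^{(p)}=(f^{(p)})_{N,0}$ and $(f_{N,s})^{(j)}(1)=0$ for $0\leq j\leq p$ to kill the boundary terms, and invoke Lemma~\ref{limitpoly} on $f^{(p)}\in A_{s-p}(\bbt)$. The paper simply asserts the two identities, whereas you sketch their verification (including the behaviour of $\Delta_{p,\varepsilon}$ at the endpoints), so this is the same argument with added detail.
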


\begin{proof} We set $p=[s]$. The case $p=0$ is exactly Lemma \ref{limitpoly}. So assume that $p\geq 1$.  Let $f\in A_{s}(\bbt)$ such that $f^{(j)}(1)=0$ for $0\leq j\leq p$. We note that for every $N\geq 1$, $(f_{N,s})^{(p)}=(f^{(p)})_{N,0}$ and $(f_{N,s})^{(j)}(1)=0$ for $0\leq j\leq p$.  By Lemma \ref{norme},
\begin{eqnarray*} 
\| f -f_{N,s}\|_{s} & \leq & \left(2^p+\frac{{(2\pi)^p}}{ (p+1)!}\right) \|f^{(p)}-(f_{N,s})^{(p)}\|_{s-p} \\
& = & \left(2^p+\frac{{(2\pi)^p}}{ (p+1)!}\right) \|f^{(p)}-(f^{(p)})_{N,0}\|_{s-p}. 
\end{eqnarray*}
It follows from Lemma \ref{limitpoly} that $\lim_{N \rightarrow +\infty} \|f^{(p)}-(f^{(p)})_{N,0}\|_{s-p}=0$, which finishes the proof.

\end{proof}



Let $s$ be a nonnegative real number, $E$ be a closed set of $\bbt$ whose boundary ($\mathrm{bdry}(E)$), is finite or countable  and  $f$ be a function  in $A_s (\bbt)$ such that $f=f'=\ldots =f^{(p)}=0$ on $E$, where $p=[s]$. We set $I=\{g\in A_s (\bbt);\ gf\in J_s (E)\};$ $I$ is a closed ideal of $A_s (\bbt)$, $J_s (E)\subset I$ and $h(I)\subset h(J_s (E))=E$. Let $\zeta\in \overset{\circ}{E}$, where $\overset{\circ}{E}$ is the interior of $E$. Since $A_s (\bbt)$ is a regular algebra there exists a function $g\in A_s (\bbt)$ whose support is contained in $\overset{\circ}{E}$ and such that $f(\zeta)=1$. We have $fg=0$ which implies that $g\in I$ and then $\zeta\notin h(I)$. It follows that $h(I)\subset \mathrm{bdry}(E)$.  Assume that $h(I)$ is nonempty and take $\zeta$ an isolated point in $h(I)$. There exists a function $h\in A_s (\bbt)$ such that $h(\zeta)=1$ and $h=0$ on a neighborhood of 
$h(I)\setminus \{\zeta\}$. For $n\geq 1$, we set
 $$
u_{n} = \bigg( \frac{\alpha-\zeta}{\alpha - \zeta \big( 1+\frac{1}{n} \big)} \bigg)^{[s]+1} \qquad (n \geq 1),
$$
 According to  \cite[Proposition 2.4]{Agr1}, we have $\dsp \lim_{n \rightarrow +\infty} \|u_{n}hf - hf\|_{s} = 0$, since $(hf)^{(j)}(\zeta)=0$ for $0\leq j\leq [s]$.
Moreover, for each $n\geq 1$, $u_n\in J_s(\{\zeta\}$ (see \cite[Proposition 6]{Atz} or \cite[Th\'eo\`eme 3.2]{Agr1}). So there exists a function $v_n\in A_s(\bbt)$, vanishing in a neighborhood of $\zeta$ and such that $\|u_n-v_n\|_s\leq 1/n$. Now $v_nh$ vanishes on a neighborhood of $h(I)$ and then $v_nh\in I$. Therefore $v_nhf\in J_s (E)$ and since $\dsp \lim_{n \rightarrow +\infty} \|v_{n}hf - hf\|_{s} = 0$, we have $hf\in J_s(E)$ and then $h\in I$. This contradicts that $h(\zeta)\ne 0$, which implies that $h(I)=\emptyset$ and $I=A_s (\bbt)$. We deduce that $f\in J_s(E)$. Notice that for $s=0$, it is well known that in the Wiener algebra $A(\bbt)$, every  closed subset of $\bbt$ whose boundary is a finite or countable set, satisfies the spectral synthesis (see \cite[Theorem, p. 245]{Katz}).

Suppose now that $E$ is  a finite union of closed arcs (not reduced to a single point). Then $\mathrm{bdry}(E)$ is a finite set  and if $f$ is a function in $A_s(\bbt)$ which vanishes on $E$, then all its derivatives of order less or equal to $[s]$, also vanish on $E$. We deduce that $E$ satisfies spectral synthesis in $A_s (\bbt)$. This allows us to prove the following lemma, which 
was obtained in \cite{Agr3} by intricate computations.

\begin{lemme} \label{synthesecantor}
Let $s$ be a nonnegative real number and $q \geq 3$ an integer. Then $E_{1/q}$ satisfies the spectral synthesis in $A_{s}(\bbt)$. 
\end{lemme}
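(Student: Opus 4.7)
The strategy exploits the previously established fact that finite unions of closed arcs satisfy spectral synthesis in $A_s(\bbt)$. Writing $E_{1/q}=\bigcap_{k\geq 1}E_k$, where $E_k$ is the finite union of $2^k$ closed arcs at the $k$-th stage of the Cantor construction (so $J_s(E_k)\subset J_s(E_{1/q})$), it suffices to approximate any $f\in I_s(E_{1/q})$ in the $A_s$-norm by functions vanishing on a neighborhood of $E_{1/q}$.

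First I would observe that $1\in E_{1/q}$ (corresponding to all $\epsilon_n=0$) and that $E_{1/q}$ is perfect; iterating the elementary fact that $g(z_n)=0$ for a sequence $z_n\to\zeta$ forces $g'(\zeta)=0$ whenever $g$ is differentiable at $\zeta$, one obtains $f^{(j)}=0$ on $E_{1/q}$ for $0\leq j\leq [s]$, and in particular $f^{(j)}(1)=0$ for these $j$. Lemma~\ref{limitpoly1} then gives $\|f-f_{N,[s]}\|_s\to 0$, where $f_{N,[s]}$ is built from the tents $\Delta_{[s],2\pi/N}(t-2j\pi/N)$ weighted by $f^{([s])}(e^{2ij\pi/N})$.

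The main construction is the splitting $f_{N,[s]}=g_N+h_N$, where $g_N$ collects those tents whose supports are disjoint from $E_{1/q}$ and $h_N$ collects the remaining \emph{near} tents. By construction, $g_N$ vanishes on a neighborhood of $E_{1/q}$, so $g_N\in J_s(E_{1/q})$. The tents in $h_N$ are centered at roots of unity within distance $2\pi/N$ of $E_{1/q}$, and their weighting coefficients $f^{([s])}(e^{2ij\pi/N})$ are uniformly small because $f^{([s])}$ is continuous and vanishes on $E_{1/q}$.

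The main obstacle is to establish $\|h_N\|_s\to 0$: a naive bound summing individual tent $A_s$-norms is inadequate, since each contributes on the order of $N^s$. Using Lemma~\ref{norme} one reduces to bounding $\|h_N^{([s])}\|_{s-[s]}$, where $h_N^{([s])}$ is the Herz-type interpolant of $f^{([s])}$ supported on the near roots of unity. Combining the boundedness principle of Lemma~\ref{controle} applied to an auxiliary function matching this interpolation data with the uniform smallness of $f^{([s])}$ near $E_{1/q}$ should yield the required estimate. Hence $g_N\to f$ in $A_s(\bbt)$ with each $g_N\in J_s(E_{1/q})$, so $f\in J_s(E_{1/q})$ and $E_{1/q}$ satisfies spectral synthesis in $A_s(\bbt)$.
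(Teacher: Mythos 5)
Your proposal opens like the paper's proof (vanishing of derivatives on the perfect set $E_{1/q}$, $f^{(j)}(1)=0$, Lemma~\ref{limitpoly1}, and the fact that finite unions of closed arcs satisfy synthesis), but then diverges into a far/near tent decomposition $f_{N,[s]}=g_N+h_N$ that has a genuine gap. The claimed bound $\|h_N\|_s\to 0$ is not supported by the tools you cite: Lemma~\ref{controle} controls $\|\tilde f_{N,0}\|_{s}$ by $K\|\tilde f\|_{s}$, i.e.\ by an $A_{s}$-norm of the interpolated function, not by its sup-norm, so the ``uniform smallness of $f^{([s])}$ near $E_{1/q}$'' does not translate into smallness of $\|h_N\|_{s}$. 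Moreover, the number of ``near'' tents grows like $N^{\log 2/\log q}$, so even a crude bound by the number of tents times their individual norms times $\sup|f^{([s])}|$ does not obviously tend to zero. You would need a quantitative modulus-of-continuity estimate not available in the lemmas at hand.

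The paper avoids this problem entirely by choosing $N=q^{n}$. With this choice, the endpoints of every $n$-th generation Cantor arc $L_{\epsilon_1,\ldots,\epsilon_n}$ are of the form $e^{2i\pi k/q^{n}}$ and $e^{2i\pi(k+1)/q^{n}}$ and lie in $E_{1/q}$; since on that arc only the two tents centered at its endpoints contribute and their coefficients $f^{([s])}$ at those points vanish, $f_{q^{n},[s]}$ vanishes identically on $F_{n}=\bigcup L_{\epsilon_1,\ldots,\epsilon_n}$. As $F_{n}$ is a finite union of closed arcs it satisfies synthesis in $A_s(\bbt)$, so $f_{q^{n},[s]}\in J_s(F_{n})\subset J_s(E_{1/q})$; there is no ``near'' part left to estimate, and Lemma~\ref{limitpoly1} finishes the proof. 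In short, the missing idea in your write-up is the alignment $N=q^{n}$, which makes \emph{all} the problematic coefficients exactly zero rather than merely small, together with invoking synthesis for $F_n$ rather than trying to force vanishing on a full neighborhood of $E_{1/q}$.
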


\begin{proof}  For all $n \geq 1$, we set
$$ 
F_{n} = \bigcup_{(\epsilon_{1}, \ldots, \epsilon_{n}) \in \{0, 1\}^{n} } L_{\epsilon_{1}, \ldots, \epsilon_{n}},
$$
where $L_{\epsilon_{1}, \ldots, \epsilon_{n}}$ is the closed arc $\Big\{ e^{i t} : \, t \in \big[ \dsp 2\pi (q-1) \sum_{k=1}^{n} \epsilon_{k} q^{-k}, 2\pi (q-1) \big(\sum_{k=1}^{n} \epsilon_{k} q^{-k} + q^{-n}\big) \big] \Big\}$. We have $\dsp E_{1/q} = \bigcap_{n \geq 1} F_{n}$ (see  \cite[Chapter I]{KaSa}).

Let $f \in I_{s}(E_{1/q})$, we have to show that $f \in J_{s}(E_{1/q})$. Notice that since $E_{1/q}$ is a perfect set,  $f$ vanishes, as well as all its derivatives of order less or equal to  $[s]$, on $E_{1/q}$. We observe that for each $n\geq 1$ and each $(\epsilon_{1}, \ldots, \epsilon_{n}) \in \{0, 1\}^{n}$, the end points of the arc $L_{\epsilon_{1}, \ldots, \epsilon_{n}}$ are of the form $\frac{2\pi k}{q^n}$ and $\frac{2\pi (k+1)}{q^n}$ for some integer $k\in \{0,1,\ldots, q^n-1\}$, and since $(f_{q^n,\epsilon})^{([s])}=f^{([s])}=0$ at these end points, $f_{q^n,\epsilon}$ vanishes on $L_{\epsilon_{1}, \ldots, \epsilon_{n}}$.
Thus $f_{q^n,\epsilon}$ vanishes on $F_{n}$. Since $F_n$ is a finite union of arcs, it satisfies spectral synthesis in $A_{s}(\bbt)$. Therefore, each function $f_{q^{n}}$ is in $J_{s}(E_{{1}/{q}})$. As $J_{s}(E_{1/q})$ is closed in $A_{s}(\bbt)$, we deduce from Lemma~\ref{limitpoly1} that $f \in J_{s}(E_{1/q})$. 
\end{proof}

Now we are able to prove the  main result of this section. 

\begin{theo} \label{TheorSynthese}
Let $s$ be a nonnegative real number, $q \geq 3$ an integer and $\omega$ a weight such that $$
\dsp \omega(n) = (1+n)^{s}, \ \ n \geq 0,
$$
$$
\dsp \omega(-n) \geq c\ (1+n)^{s} \ \ n \geq 0, \ \ \textrm{for some constant}\ \ \dsp c>0,
$$
and
$$\dsp { \omega(-n)} = O \big( e^{n^{\beta}} \big),\ \ n\to +\infty, 
\ \ \textrm{for some constant}\ \ \dsp \beta <  b \big(1/q\big).$$
Then $E_{{1}/{q}}$ satisfies spectral synthesis in $A_{\omega}(\bbt)$.
\end{theo}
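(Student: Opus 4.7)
The plan is to show that any closed ideal $I \subset A_\omega(\bbt)$ with $h(I) = E_{1/q}$ equals $I_\omega(E_{1/q})$. Since $J_\omega(E_{1/q}) \subset I \subset I_\omega(E_{1/q})$ is automatic, and since the lower bound $\omega(-n) \geq c(1+n)^s$ gives a continuous inclusion $A_\omega(\bbt) \hookrightarrow A_s(\bbt)$, I can freely use both algebras. Fix such an $I$, form $B = A_\omega(\bbt)/I$ with quotient map $\pi$, and set $a = \pi(\alpha)$. Regularity of $A_\omega(\bbt)$ yields $\sigma_B(a) = \alpha(h(I)) = E_{1/q}$, so $a$ is invertible. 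The left-multiplication operator $T : B \to B$, $T(b) = ab$, then satisfies $\sigma(T) \subset E_{1/q}$ and
\[
\|T^n\| \leq \|\alpha^n\|_\omega = (1+n)^s, \qquad \|T^{-n}\| \leq \|\alpha^{-n}\|_\omega \leq C e^{n^\beta} \qquad (n \geq 0),
\]
so Theorem~\ref{operateur} forces $\|T^{-n}\| = O(n^s)$, hence $\|a^n\|_B = O((1+|n|)^s)$ for every $n \in \bbz$.

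This polynomial growth allows me to define a bounded algebra homomorphism
\[
\Phi : A_s(\bbt) \to B, \qquad \Phi(g) = \sum_{n \in \bbz} \widehat{g}(n)\, a^n,
\]
which extends $\pi$ on $A_\omega(\bbt)$ by absolute convergence of the Fourier series. Now take $f \in I_\omega(E_{1/q}) \subset I_s(E_{1/q})$. By Lemma~\ref{synthesecantor}, $f \in J_s(E_{1/q})$, so there is a sequence $(g_n) \subset A_s(\bbt)$ with each $g_n$ vanishing on a neighborhood of $E_{1/q}$ and $\|g_n - f\|_s \to 0$. For each $n$, regularity of $A_\omega(\bbt)$ produces $\phi_n \in A_\omega(\bbt)$ equal to $1$ on $\mathrm{supp}(g_n)$ and vanishing on a neighborhood of $E_{1/q}$; then $\phi_n \in J_\omega(E_{1/q}) \subset I$, so $\pi(\phi_n) = 0$. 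Since $g_n = \phi_n g_n$ pointwise, multiplicativity of $\Phi$ gives
\[
\Phi(g_n) = \Phi(\phi_n)\,\Phi(g_n) = \pi(\phi_n)\,\Phi(g_n) = 0.
\]
Passing to the $\|\cdot\|_s$-limit yields $\Phi(f) = 0$, and since $\Phi(f) = \pi(f)$ (as $f \in A_\omega(\bbt)$), we conclude $f \in I$; this gives $I_\omega(E_{1/q}) \subset I$ and hence equality.

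The main technical point is organising the quotient side so that Theorem~\ref{operateur} delivers the polynomial estimate $\|a^{-n}\|_B = O(n^s)$ with the correct spectrum and norm bounds, which reduces to a standard regularity argument in $A_\omega(\bbt)$. Once that is done the rest is a short transfer: synthesis in $A_s(\bbt)$ from Lemma~\ref{synthesecantor} supplies $\|\cdot\|_s$-approximants $g_n$ of $f$, and each of them is annihilated through $\Phi$ by an $A_\omega(\bbt)$-cutoff $\phi_n$, which is exactly what the polynomial bound on $(\|a^n\|_B)_{n \in \bbz}$ enables.
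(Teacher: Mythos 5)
Your proposal is correct and follows essentially the same approach as the paper: form the quotient by a closed ideal over $E_{1/q}$, use Theorem~\ref{operateur} to get $\|\pi(\alpha)^{-n}\| = O(n^s)$, extend $\pi$ to a continuous homomorphism $\Phi$ (the paper's $\Theta$) on $A_s(\bbt)$, and invoke Lemma~\ref{synthesecantor}. The only cosmetic difference is that the paper phrases the last step abstractly (observing $J_s(E_{1/q}) \subset \textrm{Ker}\,\Theta$ via the hull of $\textrm{Ker}\,\Theta$ and then intersecting with $A_\omega$), whereas you unwind that inclusion by hand with cutoff functions $\phi_n$; both are correct and interchangeable.
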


\begin{proof}
We denote by $\pi_{\omega}$ the canonical surjection from $A_{\omega}(\bbt)$ to $A_{\omega}(\bbt) / J_{\omega}(E_{1/q})$, and by $T$ the operator of multiplication by $\pi_{\omega}(\alpha)$ on $A_{\omega}(\bbt) / J_{\omega}(E_{1/q})$. It is easy to see that $T$ satisfies the hypothesis of  Theorem \ref{operateur}. It follows that $\big\| \pi_{\omega}(\alpha)^{-n} \big\| =\big\| T^{-n} \big\| = O \big( n^{s} \big), \, n \rightarrow +\infty$. Thus we can define a continuous morphism $\Theta$ from $ A_{{s}}(\bbt)$ into $A_{\omega}(\bbt) / J_{\omega}(E_{1/q})$ by
$$
\Theta(f) = \sum_{n=-\infty}^{+\infty} \widehat{f}(n) \pi_{\omega}(\alpha)^{n}, \qquad  f \in A_{{s}}(\bbt) .
$$
We note that $\textrm{Ker} \, \Theta$ is a closed ideal of $A_{{s}}(\bbt)$ and that $J_{{\omega}}(E_{1/q})=\textrm{Ker} \, \Theta \cap A_{\omega}(\bbt)$, since for $f\in  A_{\omega}(\bbt)$, $\Theta (f)=\pi_{\omega}(f)$. It follows that $h(\textrm{Ker} \, \Theta) \subset E_{1/q})$ and that $J_{{s}}(E_{1/q})\subset \textrm{Ker} \, \Theta $.
 By Lemma \ref{synthesecantor}, $E_{1/q}$ satisfies spectral synthesis in $A_{{s}}(\bbt)$, that is $I_{{s}}(E_{\frac{1}{q}}) = J_{{s}}(E_{1/q})$. Therefore $I_{{\omega}}(E_{1/q})=I_{{s}}(E_{1/q}) \cap A_{\omega}(\bbt)=J_{{s}}(E_{1/q}) \cap A_{\omega}(\bbt)\subset \textrm{ker} \, \Theta \cap A_{\omega}(\bbt)=J_{\omega}(E_{1/q})$, which completes the proof.
\end{proof}


\begin{thebibliography}{1}


\bibitem{Agr1} C. Agrafeuil, \emph{Id\'eaux ferm\'es de certaines alg\`ebres de Beurling et application aux op\'erateurs \`a spectre d\'enombrable}, Studia Math.  \textbf{167}  (2005),  no. 2, 133--151.

\bibitem{Agr2} C. Agrafeuil, \emph{On the growth of powers of operators with spectrum contained in Cantor sets},  Indiana Univ. Math. J.  \textbf{54}  (2005),  no. 5, 1473--1481.

\bibitem{Agr3} C. Agrafeuil, \emph{Id\'eaux f\'erm\'es de certaines alg\`ebres de Beurling et application aux op\'erateurs. Ensembles d'unicit\'e.}, Thesis, Universit\'e Bordeaux 1, (2004).

\bibitem{AK} C. Agrafeuil, K. Kellay\emph{Tauberian type theorem for operators with interpolation spectrum for H\"{o}lder classes}, Proc. Amer. Math. Soc. \textbf{136} (2008), no. 7, 2477--2482.


\bibitem{Atz} A. Atzmon, \emph{Operators which are annihilated by analytic functions and invariant subspaces }, Acta Math., \textbf{144}  (1980), 27--63.



\bibitem{BY1} C.J.K. Batty, S.B. Yeates, \emph{Extensions of semigroups of operators}, J. Operator Theory \textbf{46} (2001), no. 1, 139--157.

\bibitem{Est1} J. Esterle, \emph{Uniqueness, strong forms of uniqueness and negative powers of contractions}, Banach Center Publ. 30, Polish Acad. Sci., Warsaw, 1994, 127–145. 

\bibitem{Est2} J. Esterle, \emph{Distributions on Kronecker sets, strong forms of uniqueness, and closed ideals of $A^{+}$}, J. Reine Angew. Math. \textbf{450} (1994), 43--82.

\bibitem{ERZ} J. Esterle, M. Rajoelina and M. Zarrabi, \emph{On contractions with spectrum contained in the Cantor set}, Math. Proc. Cambridge Philos. Soc. \textbf{117} (1995), no. 2, 339--343.

\bibitem{Her} C. S. Herz, \emph{Spectral synthesis for the Cantor set}, Proc. Nat. Acad. Sci. U.S.A. \textbf{42} (1956), 42--43.

\bibitem{Kaha1} J. P. Kahane, \emph{S\'eries de Fourier absolument convergentes}, Erg. Math. \textbf{336}, Springer Verlag, Berlin-Heidelberg-New York, 1973.

\bibitem{KaSa} J. P. Kahane, R. Salem, \emph{Ensembles parfaits et s\'eries trigonom\'etriques}, Paris, Hermann, 1963.

\bibitem{Katz} Y. Katznelson, \emph{An introduction to harmonic analysis}, John Wiley ans Sons, 1968.


\bibitem{FK} O. El--Fallah, K. Kellay, \emph{Sous-espaces biinvariants pour certains shifts pond\'er\'es}, Ann. Inst. Fourier (Grenoble) 48 (1998), 1543–1558. 

\bibitem{Zar1} M. Zarrabi, \emph{Contractions \`a spectre d\'enombrable et propri\'et\'es d'unicit\'e des ferm\'es d\'enombrables du cercle}, Ann. Inst. Fourier \textbf{43} (1993), no. 1, 251--263.

\bibitem{Zar2} M. Zarrabi, \emph{Interpolating sets for spaces of functions with derivatives in the Wiener algebra},  Indiana Univ. Math. J. \textbf{52} (2003), no. 6, 1681--1695.

\end{thebibliography}
\end{document}